\CompileMatrices\SelectTips{cm}{12}
\theoremstyle{plain}
\newtheorem{Thm}{\sc Theorem}[section]
\newtheorem{Theorem}[Thm]{\sc Theorem}
\newtheorem{Corollary}[Thm]{\sc Corollary}
\newtheorem*{Corollary*}{\sc Corollary}
\newtheorem{Proposition}[Thm]{\sc Proposition}
\newtheorem*{Proposition*}{\sc Proposition}
\newtheorem{Lemma}[Thm]{\sc Lemma}
\newtheorem{Question}[Thm]{\sc Question}
\theoremstyle{definition}
\theoremstyle{remark}
\newtheorem{Remark}[Thm]{Remark}
\newtheorem{Remarks}[Thm]{Remarks}
\newtheorem*{Example*}{Example}
\newtheorem*{Remark*}{Remark}
\newcommand{\FF}{{\mathbb F}}
\renewcommand{\O}{{\cal O}}
\newcommand{\NN}{{\mathbb N}}
\newcommand{\PP}{{\mathbb P}}
\newcommand{\End}{{\mathop{ End}\,}}
\newcommand{\Ext}{{\mathop{{\rm E}xt}}}
\newcommand{\coker}{\mathop{\rm coker}}
\mathchardef\mhyp="2D
\begin{document}

\title{
Nef line bundles over finite fields}
\author{Adrian Langer}
\date{November 27, 2011}

\maketitle


{{\sc Address:}\\
Institute of Mathematics, Warsaw University, ul.\ Banacha 2,
02-097 Warszawa, Poland\\}

\begin{abstract}
We use Totaro's examples of non-semiample nef line bundles
on smooth projective surfaces over finite fields to construct nef line bundles
for which the first cohomology group cannot be killed by any generically
finite covers. This is used to show a similar example of a nef and big
line bundle on a smooth projective threefold over a finite field.
This improves some examples of Bhatt and answers some of his questions.

Finally, we prove a new vanishing theorem for the first cohomology group of
strictly nef line bundles on projective varieties defined over finite fields.
\end{abstract}

\let\thefootnote\relax\footnote{The author is supported by the Bessel
Award of the Humboldt Foundation and a Polish MNiSW grant
(contract number N N201 420639). }

\section*{Introduction}

The main aim of this note is to see what kind of vanishing
theorems one can expect for line bundles on varieties defined over
fields of positive characteristic. In \cite{Bh} B. Bhatt proved
that for a semiample line bundle $L$ on a proper variety $X$ in
positive characteristic, one can kill cohomology $H^i(X, L)$ for
$i>0$ by passing to a finite cover of $X$. He also gave an example
of a nef line bundle $L$ on a curve defined over an uncountable
field and such that $H^1(L)$ cannot be killed by passing to finite
covers. We show that in fact this is true for a very general line
bundle, which is not semiample (see Theorem \ref{very-general}).

Since any nef line bundle on a curve defined over a finite field
is semiample, this example does not work over $\bar{\FF}_p$ and
Bhatt asked if the fact is true over such fields (see \cite[Remark
6.9]{Bh}). Here we use Totaro's examples of non-semiample line
bundles on surfaces defined over $\bar \FF_p$ (see \cite{To}) to
show that some multiple of such line bundle gives a nef line
bundle $L$ for which $H^1(L)$ cannot be killed by passing to
generically finite covers (see Theorem \ref{Totaro}). We also show
an example of a nef and big line bundle $L$ on a smooth projective
$3$-fold defined over $\bar \FF _p$, for which $H^1(L)$ cannot be
killed by passing to finite covers (see Proposition
\ref{Totaro2}). Since any nef and big line bundle on a normal
projective surface defined over $\bar \FF _p$ is semiample, this
is also the lowest possible dimension where one can find such an
example.

In analogy to the Kawamata--Viehweg vanishing theorem it is more
natural to kill, by passing to finite covers, cohomology $H^i(X,
L^{-1})$ for  nef and big line bundle $L$ and $i<\dim X$. This can always be
done in case of projective curves and surfaces (and in fact,
assuming normality, it is sufficient to use only Frobenius
morphisms). In case $L$ is semiample and big this was done in
\cite[Proposition 6.3]{Bh}, but the general nef and big case is
open. In Section \ref{van-s} we show that $H^1(X, L^{-1})$ can be
killed by some Frobenius pull back for a strictly nef line bundle
$L$ on a smooth projective variety defined over a finite field
(see Theorem \ref{surface-vanishing}). This is no longer true in
case of Totaro's example but it is consistent with the possibility
that every strictly nef line bundle on a smooth projective surface
defined over $\bar \FF _p$ is ample (this is Keel's question
\cite[Question 0.9]{Ke}).

There are a few theorems on line bundles on varieties defined over
an algebraic closure of a finite field that are no longer  valid
over other algebraically closed fields of positive characteristic.
For example, Artin proved that a nef and big line bundle on a
smooth projective surface over $\bar \FF _p$ is semiample. A basic
tool used in proofs of such facts is that numerically trivial line
bundles on a projective variety defined over $\bar \FF _p$ are
torsion. An interesting point in proof of our vanishing theorem is
that we use an analogous fact in the higher rank case, where it
follows from boundedness of the family of semistable vector
bundles with fixed Chern classes.

\medskip

The structure of the paper is as follows. In Section 1 we prove
that $H^1$ of a very general degree $0$ line bundle on a curve
cannot be killed by finite covers. In Section 2 we show an example
of a nef line bundle on a smooth projective surface over $\bar \FF
_p$ such that $H^1(L)$ cannot be killed by generically finite
covers. In Section 3 we use it to show a similar example of a nef
and big line bundle on a smooth $3$-fold defined over $\bar \FF
_p$. In Section 4 we recall vanishing theorems in positive
characteristic and ask some questions. Finally, in Section 5 we
show a new vanishing theorem for strictly nef line bundles on
varieties defined over $\bar \FF _p$.

\section{Line bundles on curves}

Let $X$ be an algebraic $k$-variety. We say that a \emph{very general point} of $X$ satisfies
some property if there exists a countable union of proper subvarieties
of $X$ such that the property is satisfied for all points outside of this
union.

The main aim of this section is proof of the following theorem
generalizing \cite[Example 6.8]{Bh}:

\begin{Theorem}\label{very-general}
Let $C$ be a smooth projective curve of genus $g\ge 2$ over an
algebraically closed field $k$ of characteristic $p>0$. Then for a
very general line bundle $M$ of degree $0$ on $C$ for any finite
surjective map $\pi : C'\to C$ the pullback map $H^1(C, M)\to
H^1(C', \pi ^* M)$ is injective.
\end{Theorem}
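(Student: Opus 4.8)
The plan is to reduce the statement to a question about the action of finite covers on the Jacobian and to exploit a countability/dimension count. First I would fix a finite surjective map $\pi : C' \to C$; after normalizing $C'$ (which only makes $H^1$ smaller by injectivity of $H^1(C',\pi^*M)\to H^1(\widetilde{C'},\pi^*M)$ for the normalization, so it suffices to treat the normal case) we may assume $C'$ is a smooth projective curve. By Serre duality, the map $H^1(C,M)\to H^1(C',\pi^*M)$ is dual to the trace/pushforward map $H^0(C',\omega_{C'}\otimes\pi^*M^{-1})\to H^0(C,\omega_C\otimes M^{-1})$, but the cleaner route is: $H^1(C,M)\ne 0$ precisely when $M\cong\omega_C$ or $h^0(M)>0$, and since $\deg M=0$ and $M$ is very general we have $h^0(M)=0$ and $M\not\cong\omega_C$ unless $g$ forces otherwise — wait, for $\deg M = 0$ nontrivial, $h^0(M)=0$ automatically, but $h^1(M)=h^0(\omega_C\otimes M^{-1})$ and $\deg(\omega_C\otimes M^{-1})=2g-2>0$, so by Riemann--Roch $h^1(M)=g-1>0$. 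So the source is always nonzero; the content is injectivity.

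The key reduction is that $\pi^*M\in\Pic^0(C')$ lies in the image of $\pi^*:\Pic^0(C)\to\Pic^0(C')$, and $\pi^*$ has finite kernel (its kernel is contained in the torsion, being a finite group scheme quotient issue — more precisely $\pi^*$ is an isogeny onto its image up to the connected component). The pullback $H^1(C,M)\to H^1(C',\pi^*M)$ fails to be injective exactly when $M$ lies in a certain countable union of "bad" subvarieties of $\Pic^0(C)$: for each fixed $\pi$ (and there are only countably many finite covers of $C$ up to the relevant equivalence, since $C$ and hence its étale-and-more fundamental data are of finite type over a field finitely generated over the prime field — here I would invoke that we may spread out over a finitely generated subfield and use that there are only countably many covers), the locus $\{M : H^1(C,M)\to H^1(C',\pi^*M)\text{ not injective}\}$ is a proper closed subset of $\Pic^0(C)$, because for $M=\O_C$ the map $H^1(C,\O_C)\to H^1(C',\O_{C'})$ is injective (it is split by $\frac{1}{\deg\pi}\pi_*$ in char $0$, but in char $p$ one argues instead that $H^0(C',\O_{C'})\to$ ... ) — actually the clean input is that $\pi_*\O_{C'}$ contains $\O_C$ as a direct summand when $\pi$ is separable, handling $H^1(\O)$, and for the inseparable/Frobenius factors one uses that Frobenius pullback on $H^1$ of an ample-twisted sheaf eventually vanishes — hmm, this needs care. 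The honest statement: the bad locus is proper closed because it is the locus where a certain determinant/rank condition on the cohomology jumps, and it does not contain all of $\Pic^0(C)$ since it misses, say, very positive twists or $\O_C$ in the separable case and one handles wild covers separately.

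The main obstacle, which I expect to be the technical heart, is showing that for each fixed finite cover $\pi$ the non-injectivity locus is a \emph{proper} subvariety of $\Pic^0(C)$ — i.e. that injectivity holds for at least one $M$ (equivalently, generically) — and simultaneously that the family of relevant covers is genuinely countable so that the union of these proper subvarieties is still a countable union and hence avoids a very general point. For the properness I would argue by semicontinuity: the function $M\mapsto\dim\ker(H^1(C,M)\to H^1(C',\pi^*M))$ is upper semicontinuous on $\Pic^0(C)$, so its minimum is attained on a dense open set, and it remains to exhibit a single $M$ with injectivity. For separable $\pi$ one takes $M=\O_C$ (using the trace splitting $\O_C\hookrightarrow\pi_*\O_{C'}$, valid since $\deg\pi$ is invertible after base change if $p\nmid\deg\pi$, and in general reducing the separable part to its Galois closure where one averages over the prime-to-$p$ part and handles $p$-groups via the same semicontinuity plus an explicit finite computation); for a power of Frobenius $F^n:C\to C$ one uses that $F^{n*}M=M^{p^n}$ and that $H^1(C,M)\to H^1(C,M^{p^n})$ is injective for $M$ very general since $M\mapsto M^{p^n}$ is an isogeny of $\Pic^0(C)$, so a very general $M$ has $M^{p^n}$ still very general with $h^0=0$, and the map on $H^1$ being the edge map of the Cartier operator is injective generically. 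For the countability, I would spread $C$ out over a finitely generated $\ZZ$-subalgebra of $k$; since $k=\bar k$ is the algebraic closure of such, and finite covers of $C$ are themselves of finite type, their moduli — even allowing ramification — form countably many families each of which is itself a variety over a finitely generated field, so up to the finitely many components and the countable set of $k$-points lying over a countable subfield, there are only countably many isomorphism classes of $\pi:C'\to C$ to consider. Taking the countable union of the (proper, by the above) bad loci over all these $\pi$ gives a countable union of proper subvarieties of $\Pic^0(C)\cong C^{(g)}$-type parameter space, and a very general $M$ lies outside it, proving injectivity for all finite surjective $\pi$ simultaneously.
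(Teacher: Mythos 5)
Your strategy --- ``for each fixed cover $\pi$ the non-injectivity locus in $\Pic^0(C)$ is a proper subvariety, there are only countably many covers, so a very general $M$ avoids the union'' --- breaks down at the countability step, and this is not a repairable detail but the crux of the problem. Over an uncountable field there are \emph{uncountably} many finite surjective maps $\pi:C'\to C$: ramified covers move in positive-dimensional families (e.g.\ double covers, or Artin--Schreier covers in characteristic $2$, with varying branch points), and spreading $C$ out over a finitely generated subfield does not help because the cover itself need not be defined over any fixed countable subfield. Only the \'etale covers form a countable set. The entire point of the paper's argument (via Bhatt's Example 6.8, reproved here as Corollary \ref{Bhatt} and the lemma following it, using Lemma \ref{torsion}) is to replace the quantification over all finite covers by two \emph{countable} families of conditions on $M$: that $M$ is non-torsion, and that $(F_C^e)^*:H^1(C,M)\to H^1(C,M^{p^e})$ is injective for every $e$. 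Your proposal has no substitute for this reduction, so the ``countable union of bad loci'' never materializes.

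Even the per-cover properness you need is not established by your witness-point arguments. The kernel of $H^1(C,M)\to H^1(C',\pi^*M)$ is governed by $H^0(C,M\otimes Q)$ with $Q=\pi_*\O_{C'}/\O_C$, so properness of the bad locus amounts to generic vanishing of $H^0(M\otimes Q)$ over $\Pic^0(C)$; for the Frobenius cover, where $Q=B=(F_C)_*\O_C/\O_C$, this is exactly Raynaud's theorem \cite[Theorem 4.1.1]{Re}, which is the nontrivial input the paper uses (together with pulling back the theta divisor of $B$ by powers of the Verschiebung to get a countable union of divisors). Your proposal only asserts that the Frobenius/Cartier map is ``injective generically,'' which is the statement to be proved, and your separable witness $M=\O_C$ via the trace splitting fails whenever $p$ divides $\deg\pi$ (the trace of $1$ is $\deg\pi\equiv 0$), with the wild case only gestured at. So both pillars of the proposed argument --- countability of covers and properness of each bad locus --- are missing, whereas the paper supplies them by the torsion/Frobenius reduction and by Raynaud's theorem respectively.
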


In particular, if the field $k$ is uncountable then on every curve $C/k$
there exist line bundles $M$ satisfying the assertion of the theorem.
If the field is countable there are no such line bundles.

Before proving this theorem we need some auxiliary facts explaining
conditions appearing in \cite[Example 6.8]{Bh}.

Let $E$ be a numerically flat bundle on a smooth projective
$k$-variety $X$. Let $\langle E \rangle$ denote the full tensor subcategory
of the category of numerically flat bundles on
$X$ spanned by $E$. Then we have the following lemma:

\begin{Lemma}\label{torsion}
Let $M$ be a numerically trivial line bundle on $X$. Then the
following conditions are equivalent:
\begin{enumerate}
\item $M$ is torsion.
\item $M$ is semiample.
\item There exists a non-negative integer $e$ and a finite \'etale
cover $h:X'\to X$ such that $M^{p^e}$ is isomorphic to a subsheaf
of $h_*\O_{X'}$.
\item There exists a non-negative integer $e$ and a finite \'etale
cover $h:X'\to X$ such that $M^{p^e}$ is an object of $\langle
h_*\O_{X'}\rangle $.
\end{enumerate}
\end{Lemma}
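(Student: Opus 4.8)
The plan is to prove the cycle of implications $(1)\Rightarrow(2)\Rightarrow(3)\Rightarrow(4)\Rightarrow(1)$, with the first two being essentially standard and the real content lying in the last step. The implication $(1)\Rightarrow(2)$ is immediate: if $M^{\otimes m}\cong\O_X$ then $M$ is globally generated in degree $m$, hence semiample. For $(2)\Rightarrow(3)$, suppose $M^{\otimes m}$ is globally generated for some $m>0$; writing $m=p^e m'$ with $p\nmid m'$, one reduces to the prime-to-$p$ case, and here a numerically trivial semiample line bundle must in fact be torsion (a semiample line bundle defines a morphism to projective space; since $M$ is numerically trivial this morphism contracts everything, forcing some power to be trivial). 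Taking that trivialization $M^{\otimes m'}\cong\O_X$ and extracting an $m'$-th root via the cyclic \'etale cover $h:X'\to X$ obtained from the torsion class (using $p\nmid m'$ so the Kummer cover is \'etale), one gets that $M^{p^e}$ pulls back to $\O_{X'}$, and then by adjunction $M^{p^e}\hookrightarrow h_*h^*M^{p^e}=h_*\O_{X'}$. The implication $(3)\Rightarrow(4)$ is trivial since any subsheaf that is itself a numerically flat bundle and that is stable under the tensor operations generated inside $\langle h_*\O_{X'}\rangle$ lands in that category — more precisely, $h_*\O_{X'}$ is numerically flat and $M^{p^e}$, being a line-bundle direct summand picture inside it after a further \'etale trivialization, lies in the Tannakian subcategory it generates; one should phrase this carefully so that ``subsheaf which is a subbundle'' suffices, since $\langle-\rangle$ is closed under subquotients.

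The heart of the matter is $(4)\Rightarrow(1)$. Here I would use Tannakian/monodromy formalism for numerically flat bundles: by Langer's work (and Nori's earlier in the \'etale case), the category of numerically flat bundles on $X$ is Tannakian, and for $X$ defined over $\bar\FF_p$ the full subcategory $\langle h_*\O_{X'}\rangle$ generated by a single object has a finite-type, in fact finite, Tannakian group. Concretely, $h_*\O_{X'}$ corresponds to a representation of $\pi_1^N(X)$ (the Nori fundamental group scheme, or its numerically-flat enlargement) that factors through a finite quotient — because $h$ is \emph{\'etale} finite, so $h_*\O_{X'}$ is even an object of the subcategory corresponding to the \'etale fundamental group, which is profinite. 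Hence every object of $\langle h_*\O_{X'}\rangle$, in particular $M^{p^e}$, corresponds to a representation factoring through a finite group, so $M^{p^e}$ has finite order in $\Pic(X)$; therefore $M$ is torsion. The one genuine input that is special to $\bar\FF_p$ — and the step I expect to be the main obstacle to state cleanly — is the finiteness of the monodromy group: over an arbitrary algebraically closed field of positive characteristic a numerically flat line bundle need not be torsion, and what rescues us is that over $\bar\FF_p$ every numerically trivial line bundle on a projective variety is torsion, which in turn rests (as the introduction flags) on boundedness of the family of semistable bundles with fixed Chern classes.

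So in writing this up I would be careful to isolate exactly where ``defined over $\bar\FF_p$'' enters: it is \emph{not} needed for $(1)\Leftrightarrow(2)\Leftrightarrow(3)$ nor for $(3)\Rightarrow(4)$, but it \emph{is} needed for $(4)\Rightarrow(1)$, via the boundedness-of-semistable-bundles argument that forces the relevant monodromy to be finite. An alternative, more hands-on route for $(4)\Rightarrow(1)$ that avoids invoking the full Tannakian machinery: since $M^{p^e}\in\langle h_*\O_{X'}\rangle$, pulling back along the \'etale cover $h$ makes $h^*M^{p^e}$ an object of $\langle h^*h_*\O_{X'}\rangle$, and $h^*h_*\O_{X'}$ splits (after possibly enlarging $h$ to a Galois closure) as a direct sum of copies of line bundles that are $h$-pullbacks of torsion classes; numerical triviality of $M$ then pins down $h^*M^{p^e}$ among the simple objects, which are all torsion, so $h^*M$ is torsion on $X'$, and finally torsion descends along the finite \'etale $h$ to give $M$ torsion on $X$. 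Either way, the essential obstacle is the same: controlling the simple objects of $\langle h_*\O_{X'}\rangle$ and using the $\bar\FF_p$ hypothesis to know they are torsion line bundles.
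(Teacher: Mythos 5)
Your skeleton (the cycle $(1)\Rightarrow(2)\Rightarrow(3)\Rightarrow(4)\Rightarrow(1)$, with $(1)\Leftrightarrow(2)$ elementary and $(1)\Rightarrow(3)$ via the prime-to-$p$ cyclic \'etale cover and adjunction) matches the paper, and your two candidate arguments for $(4)\Rightarrow(1)$ — finite monodromy of $h_*\O_{X'}$ because $h$ is finite \'etale, or pulling back along (a Galois closure of) $h$ and using that $\langle \O \rangle$ has only trivial objects — are both workable and close in spirit to the paper's proof, which instead shows $F_X^*(h_*\O_{X'})\simeq h_*\O_{X'}$ by flat base change along the cartesian Frobenius square and invokes Lange--Stuhler to \'etale-trivialize $h_*\O_{X'}$, then reduces to $\langle\O_{X''}\rangle$ and concludes torsion (of order prime to $p$) via \cite{BD}.

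However, there is a genuine error in where you locate the key hypothesis, and it infects the step you yourself call the heart of the matter. The lemma is stated and used over an \emph{arbitrary} algebraically closed field $k$ of characteristic $p$: it feeds into Corollary \ref{Bhatt} and Theorem \ref{very-general}, whose interesting case is an uncountable field where non-torsion numerically trivial line bundles exist. So the input you declare essential for $(4)\Rightarrow(1)$ — ``over $\bar\FF_p$ every numerically trivial line bundle is torsion'', resting on boundedness of semistable bundles — is simply not available; and over $\bar\FF_p$, where it is available, it would make the entire lemma vacuous (condition $(1)$ holds for every numerically trivial $M$ without ever using $(4)$), so it cannot be the content of the implication. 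In fact no field hypothesis is needed: the finiteness of the relevant monodromy comes purely from $h$ being finite \'etale (after Galois closure, $h_*\O_{X'}$ corresponds to the regular representation of the Galois group, so every object of $\langle h_*\O_{X'}\rangle$ has finite monodromy and every line bundle there is killed by a finite power, e.g.\ because a character of a finite group scheme lands in some $\mu_n$) — you state exactly this correct reason and then contradict it two sentences later; the boundedness-of-semistable-sheaves argument belongs to the paper's Section \ref{van-s}, not here. A smaller slip in your ``hands-on'' variant: for $h$ Galois \'etale, $h^*h_*\O_{X'}$ is a direct sum of \emph{trivial} line bundles (it is $\O_{X'\times_X X'}$, and $X'\times_X X'\simeq X'\times G$), not of pullbacks of torsion classes; with that correction the argument runs as in the paper — $h^*M^{p^e}$ lies in $\langle\O_{X'}\rangle$, hence is trivial, and the norm map then gives that $M^{p^e\cdot\deg h}$ is trivial — and works over any algebraically closed field of characteristic $p$, which is the generality actually required.
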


\begin{proof}
Equivalence of (1) and (2) is clear as the only globally generated
numerically trivial line bundle is trivial.

If $M$ is a torsion line bundle then for some non-negative integer
$e$ the line bundle $M^{p^e}$ is torsion of order prime to $p$.
Let $h:X'\to X$ be the cyclic covering associated to $M^{p^e}$.
Then we have $h^*(M^{p^e})\simeq \O_{X'}$ and hence
$M^{p^e}\hookrightarrow h_*h^*(M^{p^e})\simeq h_*\O_{X'}$, which
shows one implication $(1)\Rightarrow (3)$. Clearly, (3) implies
(4).

To show that (4) implies (1) let us take a finite \'etale cover
$h:X'\to X$. Since $h$ is \'etale, the diagram
$$ \xymatrix{ &X'\ar[d]^{h}\ar[r]^{F_{X'}}& X'\ar[d]^{h}\\
&X\ar[r]^{F_X}&X }$$ is cartesian (see, e.g., \cite{SGA5}). Since
$X$ is smooth, $F_X$ is flat. By flat base change we have
isomorphisms $F_X^*(h_*\O_{X'})\simeq h_*(F_{X'}^*\O_{X'})\simeq
h_*\O_{X'}$. Therefore by the Lange--Stuhler theorem (see
\cite{LSt}) the bundle $h_*\O_{X'}$ is \'etale trivializable,
i.e., there exists a finite \'etale morphism $h':X''\to X$ such
that $(h')^*(h_*\O_{X'})$ is trivial. Now if $M^{-p^e}$ is an
object of $\langle h_*\O_{X'}\rangle $ then $(h')^*M^{-p^e}$ is an
object of $\langle (h')^*h_*\O_{X'}\rangle =\langle
\O_{X''}\rangle$. But $\langle \O_{X''}\rangle$ contains only
trivial objects, so $(h')^*M^{-p^e}\simeq \O_{X''}$. But then
$M^{p^e}$ is a torsion line bundle of order prime to $p$ (this
follows, e.g., from \cite{BD}).
\end{proof}

The following corollary is a reformulated version of \cite[Example 6.8]{Bh}:

\begin{Corollary} \label{Bhatt}
Let $C$ be a smooth projective curve of genus $g\ge 2$ over an
algebraically closed field $k$ of characteristic $p>0$.
Let $M$ be a line bundle of degree $0$ on $C$.
Assume that $M$ is non-torsion and for every positive integer $e$ the map
$(F_C^e)^*: H^1(C, M)\to H^1(C, M^{p^e})$ induced by $e$-th Frobenius morphism
is injective. Then for any finite surjective map $\pi : C'\to C$
the pullback map $H^1(C, M)\to H^1(C', \pi ^* M)$ is injective.
\end{Corollary}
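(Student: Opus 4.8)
The plan is to reduce an arbitrary finite surjective cover to the two cases already understood: a finite \'etale cover and a Frobenius twist. Given $\pi:C'\to C$ finite surjective, the first step is to pass to a situation where $\pi$ factors nicely. After replacing $C'$ by its normalization (which does not decrease the pullback map, since for a dominant morphism of curves the composite $H^1(C,M)\to H^1(C',\pi^*M)\to H^1(\ti C',\ldots)$ agrees with pullback along the normalization) we may assume $C'$ is a smooth projective curve. Then I would factor $\pi$ as a composition of a purely inseparable morphism followed by a separable one, or equivalently, invoke the standard structure theory: there is $e\ge 0$ such that $\pi$ factors as $C'\xrightarrow{\ \rho\ } C^{(p^e)}\xrightarrow{\ F_C^e\ } C$ with $\rho$ separable, where $C^{(p^e)}$ is the base change of $C$ along the $e$-th power Frobenius of $k$. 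Since $k=\bar k$, the scheme $C^{(p^e)}$ is isomorphic to $C$ and under this identification $F_C^e$ is the $e$-th (relative) Frobenius and $\rho^*$ of the relevant line bundle is $\pi^*M$ pulled back appropriately; concretely one reduces to showing that for the separable map $\rho:C'\to C$ the pullback $H^1(C,N)\to H^1(C',\rho^*N)$ is injective, where $N=M^{p^e}$, using at the end the hypothesis that $(F_C^e)^*:H^1(C,M)\to H^1(C,M^{p^e})$ is injective to splice the two halves together.

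Next, for the separable part, I would pass to a Galois closure: enlarge $C'$ so that $\rho:C'\to C$ is Galois with group $G$ (again this only helps, since pullback to a further cover factors through $\rho^*$). By Serre duality $H^1(C,N)^\vee\cong H^0(C,\omega_C\otimes N^{-1})$ and $H^1(C',\rho^*N)^\vee\cong H^0(C',\omega_{C'}\otimes\rho^*N^{-1})$, and the pullback map is dual to the trace/pushforward map $H^0(C',\omega_{C'}\otimes\rho^*N^{-1})\to H^0(C,\omega_C\otimes N^{-1})$; so injectivity of $\rho^*$ is equivalent to surjectivity of this trace map. Alternatively, and more in the spirit of Lemma \ref{torsion}, I would use that $\rho^*$ on $H^1$ is injective if and only if $N$ is a direct summand of $\rho_*\rho^*N\cong N\otimes\rho_*\O_{C'}$ in a way compatible with cohomology — and since $N$ has degree $0$, the obstruction is exactly whether $\O_C$ embeds as a cohomologically split subsheaf of $\rho_*\O_{C'}$ twisted by $N$. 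This is where the hypotheses kick in: if $\rho^*N$ had $H^1(C,N)\to H^1(C',\rho^*N)$ \emph{not} injective, I would extract from the kernel a nonzero extension class splitting after pullback, and argue (using $g\ge 2$, so that $\deg(\omega_C\otimes N^{-1})=2g-2>0$ and such sheaves have sections) that this forces $N$, hence $M$, to be \'etale-trivializable up to Frobenius, i.e.\ to satisfy condition (4) of Lemma \ref{torsion}, hence torsion — contradicting the non-torsion hypothesis.

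The main obstacle I anticipate is the separable (Galois) step: making precise the claim that injectivity of $\rho^*$ on $H^1$ fails only when the degree-$0$ bundle $N$ becomes trivial on some \'etale cover. The clean way is probably to average over $G$: if $\rho$ were tamely ramified one could split $\rho_*\O_{C'}$ off a trivial summand using $\frac{1}{|G|}\mathrm{Tr}$, but in characteristic $p$ with $p\mid|G|$ the trace can vanish, which is precisely the phenomenon Bhatt's example exploits. So I expect the real content is: reduce to $\rho$ \'etale (handling ramification by a further blow-up/normalization argument or by noting ramified covers only make $H^1$ larger in the relevant direction), then for \'etale $\rho$ observe $\rho_*\O_{C'}$ is numerically flat and invoke Lemma \ref{torsion}(3)$\Rightarrow$(1) applied to $M$ itself, since a non-injective $\rho^*$ on $H^1(C,M)$ would exhibit $M$ (or $M^{p^e}$) as a subsheaf of $h_*\O_{C'}$ for an \'etale $h$. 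Packaging the Frobenius factor via the injectivity hypothesis then completes the argument; I would write the Frobenius bookkeeping carefully since that is where off-by-$p^e$ errors creep in.
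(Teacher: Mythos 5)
Your overall strategy --- factor $\pi$ into a Frobenius power followed by a separable map, dispose of the Frobenius factor with the injectivity hypothesis, and show that a separable cover can only kill classes in $H^1(C,M^{p^e})$ if $M$ is torsion --- has exactly the shape of the paper's own self-contained argument (the Lemma following the Corollary). The reductions you make (normalizing $C'$, enlarging to a Galois closure, splicing in the Frobenius factor) are legitimate, and your \'etale case is essentially correct: the kernel of $\rho^*$ is the image of $H^0(C, M^{p^e}\otimes(\rho_*\O_{C'}/\O_C))$, and for \'etale $\rho$ this quotient is numerically flat, so a nonzero section places $M^{-p^e}$ in $\langle \rho_*\O_{C'}\rangle$ and Lemma \ref{torsion} gives torsion (note that it is condition (4), not (3), that applies directly, since what you get is a subsheaf of the quotient $\rho_*\O_{C'}/\O_C$ rather than of $\rho_*\O_{C'}$ itself).

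The step you yourself flag as ``the real content'' --- reducing a ramified separable cover to an \'etale one --- is, however, a genuine gap, and neither of your suggestions fills it. Normalization or blow-ups cannot remove ramification of a finite cover of smooth curves, and ``ramified covers only make $H^1$ larger in the relevant direction'' is not true in any usable form: for a ramified separable $\rho$ the bundle $Q=\rho_*\O_{C'}/\O_C$ has negative degree but may still contain degree-zero line subbundles, so a nonzero map $M^{-p^e}\to Q$ yields neither a contradiction nor torsionness; and, as you note, the trace/averaging argument fails precisely when $p$ divides the order of the Galois group, which is the phenomenon at stake. The paper bridges this step with a different device: from a class in the kernel it forms the rank-two extension $0\to M\to E\to \O_C\to 0$, checks that $E$ is simple and strongly semistable, and invokes the argument of \cite[Proposition 2.2]{MS}, which shows that if $\pi^*E$ fails to be simple for a generically \'etale $\pi$, then already some finite \'etale $\tau:C''\to C$ makes $\tau^*E$ non-simple; only after this reduction does the torsion conclusion via Lemma \ref{torsion} apply. (The paper's primary proof is even shorter: Lemma \ref{torsion} identifies the non-torsion hypothesis with condition (2) of \cite[Example 6.8]{Bh}, and the argument of that example is then quoted.) Without this Mehta--Stuhler--style reduction, or some equivalent treatment of (possibly wild) ramification, your argument does not close.
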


\begin{proof}
Lemma \ref{torsion} says that a line bundle is torsion if and only
if it satisfies the condition (2) of \cite[Example 6.8]{Bh}.
Therefore our assumptions imply that $M$ satisfies conditions
(1) and (2) of \cite[Example 6.8]{Bh}. Then the argument given in
this example shows the required assertion.
\end{proof}

\medskip
We can also prove this corollary in a different way using elements of $H^1(M)$
to construct vector bundles. A similar approach is used later to prove
Theorems \ref{Totaro}  and \ref{surface-vanishing}.
It is easy to see that the statement is equivalent to the following:

\begin{Lemma}
Let $C$ be a smooth projective curve of genus $g\ge 2$ over an
algebraically closed field $k$ of characteristic $p>0$.
Let $M$ be a line bundle of degree $0$ on $C$.
Assume that for some generically \'etale morphism $\pi : C'\to C$
the pullback map $H^1(C, M)\to H^1(C', \pi ^* M)$ is not injective.
Then $M$ is torsion.
\end{Lemma}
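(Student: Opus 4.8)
The plan is to reformulate non-injectivity of the pullback on $H^1$ as the existence of a splitting of a certain extension after pulling back, and then use the structure theory of numerically flat bundles together with Lemma \ref{torsion}. First I would note that a nonzero class $\xi \in H^1(C,M)$ corresponds to a non-split extension
$$
0 \to M \to E \to \O_C \to 0,
$$
and the vanishing of $\pi^*\xi \in H^1(C',\pi^*M)$ says exactly that $\pi^*E$ splits, i.e.\ $\pi^*E \simeq \pi^*M \oplus \O_{C'}$. Since $\deg M = 0$, the bundle $E$ is a degree-$0$ rank-$2$ bundle; it is in fact numerically flat (it sits in an extension of numerically flat bundles), so $\langle E\rangle$ makes sense and $E$ defines an object in the Tannakian category of numerically flat bundles on $C$.

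Next I would pass to an étale cover to trivialize things. The key input is that $\pi$ is generically étale; after replacing $\pi$ by its Galois closure and then by the composite with a suitable étale cover, one can arrange (using the Lange--Stuhler theorem exactly as in the proof of Lemma \ref{torsion}, via the cartesian Frobenius squares and flat base change) that the relevant direct image sheaves become étale-trivializable. The point is to show that if $\pi^*E$ is trivial (or lies in $\langle \O_{C'}\rangle$) for \emph{some} such cover, then $E$ itself becomes, after some Frobenius twist $p^e$ and some finite étale cover $h : C'' \to C$, an object of $\langle h_*\O_{C''}\rangle$. Then the monodromy (Tannakian fundamental group) of $E$ is finite, hence the same is true of its sub-line-bundle $M$: concretely, $M^{p^e}$ becomes an object of $\langle h_*\O_{C''}\rangle$, so by the implication $(4)\Rightarrow(1)$ of Lemma \ref{torsion}, $M$ is torsion.

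The main obstacle is the descent step: non-injectivity is only assumed for \emph{one} generically étale $\pi$, not for an étale one, so $\pi^*E$ splitting does not immediately say anything about the monodromy of $E$. The idea to get around this is that $H^1(C,M) \to H^1(C',\pi^*M)$ having nontrivial kernel forces, after separating out the (possibly inseparable, here absent since $\pi$ is generically étale) and the étale parts, a drop in the monodromy group of the extension class; equivalently, one shows that the natural map from $\pi_*\O_{C'}$-modules forces $E$ to be a summand of $\pi_*\pi^*E$, which is in $\langle \pi_*\O_{C'}\rangle$, and one then handles $\pi_*\O_{C'}$ by replacing $\pi$ with an étale cover dominating it after a Frobenius twist — this last replacement is exactly where the positive-characteristic hypothesis and the Lange--Stuhler / Nori-type finiteness of étale-trivializable bundles enters. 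Once $E$ (hence $M$ up to Frobenius) is placed inside $\langle h_*\O_{C''}\rangle$, Lemma \ref{torsion} closes the argument. I would expect the careful bookkeeping of ``which cover trivializes what'' to be the only genuinely delicate part; everything else is formal manipulation of extensions and Tannakian subcategories.
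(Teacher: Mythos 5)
Your opening move coincides with the paper's: a nonzero class $\xi$ in the kernel gives a non-split extension $0\to M\to E\to \O_C\to 0$ whose pullback by $\pi$ splits, and the final blow is to be delivered by Lemma \ref{torsion}. But the step you yourself identify as the main obstacle --- descending from the given generically \'etale (possibly ramified) $\pi$ to a finite \emph{\'etale} cover --- is exactly where your proposed mechanism breaks down. The Lange--Stuhler argument you want to imitate from the proof of Lemma \ref{torsion} needs the Frobenius base-change square to be cartesian, which holds only for \'etale $h$; for a ramified $\pi$ one has $F_C^*(\pi_*\O_{C'})\not\simeq \pi_*\O_{C'}$, and in fact $\pi_*\O_{C'}$ has negative degree by Riemann--Hurwitz, so it is not numerically flat, is never \'etale-trivializable, and $\langle \pi_*\O_{C'}\rangle$ is not a subcategory to which Lemma \ref{torsion} (whose conditions (3) and (4) are stated for finite \'etale covers) can be applied. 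Likewise, $E$ need not be a direct summand of $\pi_*\pi^*E$: the unit $E\to\pi_*\pi^*E$ is split by the trace only when $\deg\pi$ is prime to $p$, and in characteristic $p$ no such splitting is available in general. What is true is only that $\xi$ lifts to $H^0(C, M\otimes(\pi_*\O_{C'}/\O_C))$, which for ramified $\pi$ lands in a bundle of negative degree and forces nothing about $M$. Finally, ``replacing $\pi$ by an \'etale cover dominating it after a Frobenius twist'' is not possible: Frobenius only absorbs inseparability, and since $\pi$ is already generically \'etale its ramification is genuine and is not dominated by any \'etale cover.

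The idea that is missing is the one the paper takes from the proof of \cite[Proposition 2.2]{MS}: the non-split extension forces $E$ to be simple (use $h^0(E)=0$ and $h^0(E^*)\le 1$ when $M\not\simeq\O_C$, via $0\to E\to\End E\to E^*\to 0$), $E$ is strongly semistable because $M$ and $\O_C$ have the same degree, and the splitting of $\pi^*E$ makes $\pi^*E$ non-simple; the Mehta--Subramanian argument then says that for a simple strongly semistable bundle on a curve, non-simplicity after a generically \'etale pullback already occurs after a finite \'etale cover $\tau:C''\to C$ through which $\pi$ factors. Once one is over such a $\tau$, non-simplicity of $\tau^*E$ yields either $\tau^*M\simeq\O_{C''}$ (so $M$ is \'etale-trivializable, hence torsion) or $\tau^*\xi=0$, in which case $\xi$ lifts to $H^0(C, M\otimes(\tau_*\O_{C''}/\O_C))$ with $\tau$ \emph{\'etale}, so $M^{-1}$ is an object of $\langle\tau_*\O_{C''}\rangle$ and Lemma \ref{torsion} applies. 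Without this descent ingredient (or a substitute for it), your argument does not go through.
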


\begin{proof}
Let us note that the proof of  \cite[Proposition 2.2]{MS} shows the following
fact. Let $\pi : C'\to C$ be a generically \'etale morphism and let $E$ be a simple
strongly semistable vector bundle on $C$. If $\pi ^*E$ is not simple then
$\pi$ factors through a finite \'etale morphism $\tau: C''\to C$ such that
$\tau ^*E$ is not simple.

Now if the pullback map $\pi^*$ is not injective on $H^1(C, M)$ then
there exists a non-trivial extension $\xi \in \Ext^1(\O_C, M)=H^1(C, M)$ giving
a short exact sequence
$$0\to M\to E\to\O_C\to 0$$
such that its pull back by $\pi^*$ splits. Since $M$ and $\O_C$ are line bundles
of the same degree, $E$ is strongly semistable.
Tensoring the above sequence by $E^*$
we get
$$0\to E\to \End E\to E^*\to 0.$$
This sequence implies that $E$ is simple, whereas $h^0(\pi^*\End E)\ge 2$.
Therefore by the above there exists a finite \'etale morphism $\tau: C''\to C$ such that
$\tau ^*E$ is not simple. Then the pull-back of the above sequence shows that either $h^0(\tau ^*E)\ge 1$
or $h^0(\tau ^*E^*)\ge 2$. In both cases we see that either $\tau ^*M$ is trivial
or $\tau ^*\xi=0$. In the first case $M$ is \'etale trivializable, so it is torsion.
In the second case the kernel of $H^1(C, M)\to H^1(C'', \tau ^* M)$ is non-trivial.
Then $H^0(C, M\otimes (\tau_*\O_{C''}/\O_C))\ne 0$, so $M^{-1}$ is an object
of  $\langle \tau_*\O_{C''}\rangle $ and $M$ is torsion by Lemma \ref{torsion}.
\end{proof}

\medskip

\emph{ Proof of Theorem \ref{very-general}.}

We can use the absolute Frobenius morphism $F_C:C\to C$ to define
the following short exact sequence
$$0\to \O_C\to (F_C)_*\O_C\to B\to 0.$$
By \cite[Theorem 4.1.1]{Re} we know that for a general line bundle
$M$ we have $H^0(C, B\otimes M)=0$. In fact, this is true for
every line bundle which does not lie in the zero section $Z$ of
the theta divisor $\theta_B$ defined by $B$. But then using the
above short exact sequence we get the following exact sequence
$$H^0(B\otimes M)=0\to H^1(M)\to H^1((F_C)_*(F_C^*M))=H^1(C, (F_C)^* M).$$
Therefore for a general line bundle $M$ the map $(F_C)^*: H^1(C,
M)\to H^1(C, M^{p})$ is injective.

Let $J_C$ denote the Jacobian of $C$. Then the pull back by the
$e$-th Frobenius morphism $F_C^e$ defines the Verschiebung
morphism $V^e: J_C\to J_C$, which is a finite morphism. Now if a
line bundle $M$ does not lie in the divisor $(V^e)^{-1}(Z)$ of $Z$
then the map $(F_C)^*: H^1(C, M^{p^e})\to H^1(C, M^{p^{e+1}})$ is
injective. Therefore if $M$ lies outside of $\bigcup_{e\ge
0}(V^e)^{-1}(Z)$ then for every positive integer $e$ the map
$(F_C^e)^*: H^1(C, M)\to H^1(C, M^{p^e})$ is injective.

Now let us note that the subset of torsion line bundles forms in
$J$ a countable union of points. Therefore by Corollary \ref{Bhatt}
a very general line bundle $M$ satisfies the
required assertion.

\section{Totaro's example}\label{Tot}

Let $X$ be a normal projective variety of dimension $n\ge 2$ defined over
an algebraically closed field $k$ of characteristic $p>0$.
Let $D$ be a connected reduced effective Cartier divisor on $X$ and set $L=\O_X(D)$.
Suppose that the restriction of $L$ to $D$ has order $p$ in the Picard
group of $D$ and no multiple of $D$ moves in its linear
series  on $X$ (i.e., $h^0(X, \O_X(mD))=1$ for $m>0$).
Let us also assume that $H^1(X, \O_X)=0$.

All the above conditions are satisfied in Totaro's example in \cite[Theorem 6.1]{To},
in which $X$ is a smooth projective surface and $k=\bar \FF_p$.

\begin{Theorem}\label{Totaro}
Let us take $M=L^{-p-1}$ or $M=L^{p-1}$.
Then for every generically finite surjective morphism
$\pi : Y\to X$ the induced map $H^1(X, M)\to H^1(Y, \pi^*M)$
is non-zero.
\end{Theorem}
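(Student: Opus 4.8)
The plan is to pin down a distinguished non‑zero class $\xi\in H^1(X,M)$ and to show that $\pi^*\xi\neq 0$ for every generically finite surjective $\pi\colon Y\to X$; since $\xi\neq 0$, the map $H^1(X,M)\to H^1(Y,\pi^*M)$ is then non‑zero. In both cases $M\otimes L$ is $L^{-p}$ or $L^{p}$, and since $L|_D$ has order $p$ in $\Pic D$ I would fix an isomorphism $\phi\colon (M\otimes L)|_D\xrightarrow{\ \sim\ }\O_D$. Multiplication by the tautological section $s\in H^0(X,L)$ (with $\mathrm{div}(s)=D$) gives a short exact sequence
$$0\longrightarrow M\xrightarrow{\ \cdot s\ } M\otimes L\xrightarrow{\ \phi\circ\mathrm{res}\ }\O_D\longrightarrow 0,$$
and I let $\xi\in H^1(X,M)$ be the image of $1\in H^0(D,\O_D)$ under the connecting map $\delta$. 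From the long exact sequence $\ker\delta$ is the image of $H^0(X,M\otimes L)$ in $H^0(\O_D)$, and this is $0$: when $M=L^{-p-1}$ because $H^0(X,L^{-p})=0$, and when $M=L^{p-1}$ because $H^0(X,L^{p})=k\!\cdot\!s^{p}$ (by hypothesis) and $s^{p}|_D=(s|_D)^{p}=0$. Hence $\xi\neq 0$. (Equivalently, $\xi$ is the class of the rank‑two bundle $E$ obtained by pulling this sequence back along $\O_X\twoheadrightarrow\O_D$; since $E$ also sits in $0\to L^{-1}\to E\to M\otimes L\to 0$, the condition $\pi^*\xi=0$ says $\O_Y$ splits off $\pi^*E$, and via the second sequence this gives rise to the section $\tau$ used below.)

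First I would reduce to the case where $\pi$ is finite and $X,Y$ are normal. Passing to the normalization $\widetilde Y$ of $Y$ only makes the pullback harder to kill; and if $X'$ denotes the normalization of $X$ in the finite extension $k(\widetilde Y)$, then $\widetilde Y\to X$ factors as $\widetilde Y\to X'\to X$ with $X'\to X$ finite surjective, $X'$ normal, and $\widetilde Y\to X'$ proper birational, so that $(\widetilde Y\to X')^*$ is injective on $H^1$ (Leray plus the projection formula, using $(\widetilde Y\to X')_*\O_{\widetilde Y}=\O_{X'}$). Thus it suffices to prove the statement for the finite surjective morphism $X'\to X$; the point of this reduction is that pushforward of divisors and norms of rational functions then become available.

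So let $\pi\colon Y\to X$ be finite surjective of degree $d$ with $X,Y$ normal, and suppose toward a contradiction that $\pi^*\xi=0$. Pulling the displayed sequence back stays exact (multiplication by $\pi^*s$ is injective since $\pi$ is dominant), and by naturality of $\delta$ the class $\pi^*\xi$ is the image of $1\in H^0(\pi^{-1}(D),\O_{\pi^{-1}(D)})$ under the connecting map of the pulled‑back sequence; hence $1$ lifts to a section $\tau\in H^0(Y,\pi^*(M\otimes L))$ whose restriction to $\pi^{-1}(D)$ equals, under $\pi^*\phi$, the unit section — in particular $\tau$ is nowhere vanishing along $\pi^{-1}(D)$. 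If $M=L^{-p-1}$, then $\tau\in H^0(Y,\pi^*L^{-p})=H^0(Y,\O_Y(-p\,\pi^*D))$, which is $0$ because $Y$ is integral and proper while $-p\,\pi^*D$ is a non‑zero anti‑effective divisor; this contradicts $\tau\neq 0$. If $M=L^{p-1}$, let $Z=\mathrm{div}(\tau)\ge 0$; then $\mathrm{Supp}\,Z\cap\pi^{-1}(D)=\varnothing$ and $Z-p\,\pi^*D=\mathrm{div}(g)$ with $g=\tau/(\pi^*s)^{p}$. The pushforward $\pi_*Z$ is an effective cycle supported in $\pi\bigl(Y\setminus\pi^{-1}(D)\bigr)=X\setminus D$, and since $\pi_*(\pi^*D)=d\,D$ we get $\pi_*Z-pd\,D=\mathrm{div}\bigl(N_{Y/X}(g)\bigr)$. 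Therefore $N_{Y/X}(g)\cdot s^{pd}$ is a global section of $\O_X(pd\,D)$ with divisor $\pi_*Z\ge 0$, so it lies in $H^0(X,\O_X(pd\,D))=k\!\cdot\!s^{pd}$; hence $N_{Y/X}(g)$ is a non‑zero constant and $\pi_*Z=pd\,D$. But $pd\,D\neq 0$ is supported on $D$ while $\pi_*Z$ is supported off $D$ — a contradiction.

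The step I expect to be the real obstacle is the case $M=L^{p-1}$: first, translating $\pi^*\xi=0$ into the existence of a section $\tau$ of $\pi^*L^{p}$ that trivializes $\pi^*L^{p}$ along $\pi^{-1}(D)$ — this is exactly why $\xi$ has to be manufactured from the order‑$p$ triviality of $L|_D$ — and then descending the effective divisor $Z=\mathrm{div}(\tau)$ from $Y$ down to $X$. One should resist trying to control $h^0(Y,\pi^*L^{m})$ directly, since that may grow after a cover; instead the pushforward of $Z$, combined with the hypothesis $h^0(X,\O_X(mD))=1$, is what forces the contradiction, and the reduction to finite normal covers is precisely what legitimizes the pushforward and the norm computation.
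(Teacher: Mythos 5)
Your argument is correct, but it follows a genuinely different route from the paper's. The paper computes $h^1(X,\O_X(-(p+1)D))=1$ using $H^1(X,\O_X)=0$, builds a rank two bundle $E$ as a non-trivial extension of $\O_X((p+1)D)$ by $\O_X$, shows (via torsion-freeness of the cokernel of $\O_X(pD)\hookrightarrow E$ and an Euler characteristic computation on a surface section, giving $Z=\emptyset$) that $E$ is also a non-split extension of $\O_X(D)$ by $\O_X(pD)$, and then proves that both sequences stay non-split after pullback by establishing $h^0(Y,\pi^*L^m)=1$ through the bilinear map lemma and Ueno's theorem on Iitaka dimension. You instead produce the classes as $\delta(1)$ for $0\to M\to M\otimes L\to \O_D\to 0$, which exists precisely because $L^{\pm p}|_D\simeq \O_D$ (for $M=L^{-p-1}$ this is the paper's class up to scalar, since $h^1=1$ there); you then reduce to a finite cover of normal varieties by relative normalization together with Zariski's main theorem and Leray, and derive the contradiction by pushing forward $\mathrm{div}(\tau)$ and taking norms, so that only the hypothesis $h^0(X,\O_X(mD))=1$ on the base is used. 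Your route avoids the rank-two bundle construction, the hypothesis $H^1(X,\O_X)=0$, the connectedness of $D$, and the Iitaka-dimension input, at the cost of the standard identities $\pi_*(\pi^*D)=(\deg\pi)\,D$ and $\pi_*\mathrm{div}(g)=\mathrm{div}\bigl(N_{k(Y)/k(X)}(g)\bigr)$ for finite morphisms of normal varieties (both valid here, also in the inseparable case); the paper's route, in exchange, yields the useful auxiliary fact $h^0(Y,\pi^*L^m)=1$ on every generically finite cover. One hypothesis you share with the paper but should state explicitly: $Y$ is tacitly assumed complete --- the paper needs this for Ueno's theorem, and you need it so that $\widetilde Y\to X'$ is proper birational in your reduction and, in the case $M=L^{-p-1}$, so that $H^0(Y,\O_Y(-p\,\pi^*D))=0$.
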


\begin{proof}

Since $D$ is connected and reduced, we have $H^0(X, \O_D)=k$.
Therefore the short exact sequence
$$0\to \O_X(-D)\to \O_X \to \O_D\to 0$$
shows that $H^1(X, \O_X(-D))$ embeds into $H^1(X,\O_X)=0$.
Similarly, the short exact sequences
$$0\to \O_X(-(a+1)D)\to \O_X(-aD) \to \O_D(-aD)\to 0$$
show that $H^1(\O_X(-(a+1)D))\hookrightarrow  H^1(\O_X(-aD))$ for $a<p$.
Hence $H^1(\O_X(-aD))=0$ for $a\le p$.
On the other hand,  by assumption we have  $\O_D(-pD)\simeq \O_D$, so the above sequence
for $a=p$ shows that $h^1(\O_X(-(p+1)D))=1$.

Let $E$ be the vector bundle defined by a non-trivial extension
in $\Ext ^1 (\O_X((p+1)D), \O_X)\simeq H^1(X, L^{-(p+1)})$.
Since $\Ext ^1 (\O_X(pD), \O_X)=H^1(\O_X(-(p+1)D))=0$ we get a commutative diagram
$$ \xymatrix{
&0\ar[r]& \O_X\ar[r]&E\ar[r]&\O_X((p+1)D)\ar[r]&0\\
&0\ar[r]& \O_X\ar[u]\ar[r]&\O_X\oplus \O_X(pD)\ar[u]\ar[r]&\O_X(pD)\ar[u]_{+D}\ar[r]&0
}$$
In particular, we have an inclusion $j: \O_X(pD)\hookrightarrow E$. Note that the cokernel of this
inclusion is torsion-free. Otherwise, we can find a non-zero effective divisor $D'$
such that $\O_X(pD+D')\hookrightarrow E$. Composing this map with projection $E\to  \O_X((p+1)D)$
we see that $D'\le D$. Therefore $D'=D$ (here we use that $h^0(X, L)=1$) and the extension
defined by $E$ in $\Ext ^1 (\O_X((p+1)D), \O_X)$ is trivial, a contradiction.

Now comparing determinants and using normality of $X$ we see that $\coker j$ is of the form
$I_Z\otimes \O_X(D)$ for some subscheme $Z\subset X$ of codimension $\ge 2$.
Since $E$ is locally free, the short exact sequence
$$0\to \O_X(pD)\to E\to I_Z\otimes \O_X(D)\to 0$$
shows that $Z$ is a locally complete intersection of codimension $2$.

Let us consider the following commutative diagram
$$ \xymatrix{
&& 0&&&\\
&& \O_D((p+1)D)\ar[r]\ar[u]&0&&\\
&0\ar[r]& \O_X((p+1)D)\ar[r]\ar[u]& \O_X((p+1)D)\ar[r]\ar[u]&0&\\
&0\ar[r]& \O_X(pD)\ar[u]\ar[r]&E\ar[u]\ar[r]&I_Z\otimes \O_X(D)\ar[u]\ar[r]&0\\
& & 0\ar[u]\ar[r]&\O_X \ar[u]\ar[r]& I_Z\otimes \O_X(D)\ar[u]_{=}&\\
&&&0\ar[u]\ar[r]&0\ar[u]&
}$$
By the snake lemma we have a short exact sequence
$$0\to \O_X \to  I_Z\otimes \O_X(D)\to \O_D((p+1)D)\to 0.$$
Let us consider a sufficiently general complete intersection
$S=H_1\cap ...\cap H_{n-2}$  of very ample divisors $H_1,...,H_{n-2}$.
Restricting the above sequence to $S$ and using $\O_D((p+1)D)\simeq \O_D(D)$ we get
$$0\to \O_S\to I_{Z\cap S}\otimes \O_S(D)\to \O_{D\cap S}(D)\to 0.$$
Therefore $\chi (S, I_{Z\cap S}\otimes \O_S(D))=\chi (S, \O_S(D))$.
On the other hand, $Z\cap S$ is a finite set of points, so  $\chi (S, I_{Z\cap S}\otimes \O_S(D))=\chi (S, \O_S(D))- \deg (Z\cap S)$,
which implies that  $Z\cap S=\emptyset$.
Therefore $Z=\emptyset$ and we have a short exact sequence
$$0\to \O_X(pD)\to E\to \O_X(D)\to 0$$
defining an extension class in $\Ext ^1 (\O_X(D), \O_X(pD))\simeq H^1(X, L^{p-1})$.
Note that this sequence is non-split. Otherwise, we have $E\simeq \O_X(pD)\oplus \O_X(D)$.
Since $h^0( \O_X(pD))=h^0(\O_X(D))=1$, the map $\O_X\to E$ has to vanish on the divisor $D$,
contradicting the fact that its cokernel is torsion-free.

Now let us take a generically finite surjective morphism
$\pi : Y\to X$. Note that $h^0(Y,\pi^*L^m)=1$
for all $m\ge 0$. Indeed, if $h^0(Y, \pi^*L^{m_0})\ge 2$
then by the bilinear map lemma we have
$$h^0(Y, \pi^*L^{(l+1)m_0})\ge h^0(Y, \pi^*L^{lm_0})+h^0(Y, \pi^*L^{m_0})-1\ge ...\ge l+2$$
for $l\ge 1$. This implies that $\kappa (Y, \pi^*L)\ge 1$ and
hence by \cite[Theorem 5.13]{Ue} (the proof of which works in all
characteristics) we have $\kappa (X, L)\ge 1$, a contradiction.

To prove that the induced map $H^1(X, M)\to H^1(Y, \pi^*M)$
is non-zero for $M=L^{-p-1}$ or $M=L^{p-1}$,
it is sufficient to prove
that the sequences
$$0\to \O_Y\simeq\pi^*\O_X\to \pi^*E\to \pi^*\O_X((p+1)D)\to 0$$
and
$$0\to \pi^*\O_X(pD)\to \pi^*E\to \pi^*\O_X(D)\to 0$$
are non-split. If the first sequence splits then
the cokernel of the map $\pi^*\O_X(pD)\to \pi^*E\simeq \O_Y \oplus \pi^*\O_X((p+1)D)$ contains
torsion, a contradiction.
If the second sequence splits then, since
$h^0(Y,\pi^*\O_X(pD))=h^0(Y, \pi^*\O_X(D))=1$, the map $\O_Y\to \pi^*E$ has to
vanish along the divisor $\pi^* D$. But this contradicts torsion-freeness of its cokernel.
\end{proof}

\begin{Remark}\label{rem}
Note that the beginning of proof of the above theorem shows that
$H^1(\O_X(-aD))$ is non-zero for every $a\ge p+1$.
\end{Remark}

\section{A $3$-fold example}

Let us recall that any nef and big divisor $L$ on a projective surface $X$
over the algebraic closure of a finite field is semiample
(see \cite[Introduction]{To}). In this case there exists a finite surjective
morphism $\pi: Y\to X$ such that the induced map $H^1(X, L)\to H^1(Y, \pi^*L)$ is zero
(see \cite[Proposition 6.2]{Bh}).

Here we show that this is no longer true for $3$-folds:

\begin{Proposition} \label{Totaro2}
There exists a nef and big line bundle $L_W$ in a smooth projective
$3$-fold $W$ over $\bar \FF_p$ such that for every finite
surjective morphism $\pi: Z\to W$ the induced map $H^1(W, L_W)\to
H^1(Z, \pi^*L_W)$ is non-zero.
\end{Proposition}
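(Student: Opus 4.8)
The idea is to produce the threefold $W$ from Totaro's surface $X$ together with the line bundle $L$ of Section~\ref{Tot} by a cone-type or blow-up construction, arranging that a nef and big line bundle $L_W$ on $W$ restricts on a suitable divisor to the non-semiample bundle $L$ (or one of its twists $L^{p-1}$, $L^{-p-1}$) whose $H^1$ we already control via Theorem~\ref{Totaro}. Concretely, I would take $W$ to be (a resolution of) the projective cone over $(X,L)$, or more flexibly the $\PP^1$-bundle $W=\PP_X(\O_X\oplus L^{-1})\to X$ which contains two disjoint sections: the ``$\infty$-section'' $X_\infty$ with normal bundle $L$ and the ``$0$-section'' $X_0$ with normal bundle $L^{-1}$. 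On $W$ the tautological bundle $\O_W(1)$ together with a large multiple of the pullback of an ample bundle from $X$ gives a nef and big line bundle $L_W$; its restriction to $X_\infty$ is $L$ (up to a twist by the ample class, which is harmless since we may absorb it), and because $X_\infty$ is a divisor on $W$ we get a restriction exact sequence relating $H^1(W,L_W)$ to $H^1(X_\infty,L|_{X_\infty})$.

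\textbf{Key steps.} First I would fix the geometry: set $W=\PP_X(\O_X\oplus L^{-1})$, check $W$ is a smooth projective threefold over $\bar\FF_p$ since $X$ is a smooth surface, and identify the relevant divisors and normal bundles. Second, I would choose $L_W=\O_W(X_\infty)\otimes p^*A^{\otimes N}$ for $N\gg 0$ and $A$ ample on $X$; one checks $L_W$ is nef and big (it is $p$-relatively ample and eventually positive along the base, or one argues via the cone picture that $L_W$ is big because $L$ is big on $X$ after the twist, and nef because $L$ is nef). Third, using the exact sequence
$$0\to L_W(-X_\infty)\to L_W\to L_W|_{X_\infty}\to 0$$
together with a vanishing or injectivity statement for $H^0/H^1$ of $L_W(-X_\infty)=p^*A^{\otimes N}$ coming from $H^1(X,\O_X)=0$ and the projection formula, I would reduce the non-vanishing of $H^1(W,L_W)\to H^1(Z,\pi^*L_W)$ to the corresponding non-vanishing for $L|_{X_\infty}$ on $X$. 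The last point is exactly what Theorem~\ref{Totaro} and Remark~\ref{rem} supply: for a suitable multiple (and after matching up the twist by $A$, which one does by also allowing $X$ itself to be fibered or by noting that $H^1(\O_X(-aD))\ne0$ for all $a\ge p+1$), the pullback on $H^1$ along $\pi$ restricted over $X_\infty$ cannot vanish; and any finite surjective $\pi:Z\to W$ restricts over a neighbourhood of $X_\infty$ to a generically finite surjective map onto $X$, so Theorem~\ref{Totaro} applies. One must be a little careful: $\pi^{-1}(X_\infty)$ need not be irreducible, but it dominates $X$, so picking a component gives the generically finite cover of $X$ required.

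\textbf{The main obstacle.} The delicate point is bookkeeping the twist by the ample bundle $A$ while staying inside the hypotheses of Section~\ref{Tot}: Theorem~\ref{Totaro} is about $L^{p-1}$ and $L^{-p-1}$ exactly, not about $L^{\pm1}\otimes A^{\otimes N}$, so I either need a version of that theorem allowing an ample twist on the base (which should still go through, since the construction of $E$ only used $h^0(\O_X(mD))=1$ and the order-$p$ restriction to $D$, both unaffected by tensoring the ambient sequence by $p^*A$), or I need to choose the cone/$\PP^1$-bundle construction so that no twist is needed — e.g.\ take instead $W$ to be a projective bundle over $X$ designed so that $\O_W(1)$ itself is already nef and big, which forces $L$ to be big, contradicting the setup, so some twist is genuinely unavoidable and the first route is the right one. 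Establishing bigness of $L_W$ rigorously (as opposed to just nefness) is the other place where care is needed; I expect this follows from $L_W^3>0$, computed on the projective bundle via the relation $\xi^2=\xi\cdot p^*c_1(L^{-1})$ in the Chow ring, together with bigness of $A$ on $X$. Everything else is a routine diagram chase with the restriction sequence, closely parallel to the proof of Theorem~\ref{Totaro} itself.
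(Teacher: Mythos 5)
Your overall strategy (a $\PP^1$-bundle over Totaro's surface, a distinguished section, and a reduction of the non-killability statement on $W$ to Theorem \ref{Totaro} on $X$ via a base change $Y\subset Z\times_W X$ dominating $X$) is the right shape, and the last step --- picking an irreducible component of $Z\times_W X$ dominating $X$ to get a generically finite cover to which Theorem \ref{Totaro} applies --- matches the paper. But the specific choice $W=\PP_X(\O_X\oplus L^{-1})$, $L_W=\O_W(X_\infty)\otimes p^*A^{\otimes N}$ has a genuine gap that you yourself flag and then do not close. With this choice $L_W|_{X_\infty}\simeq L\otimes A^{\otimes N}$, and Theorem \ref{Totaro} says nothing about $L^{\pm 1}\otimes A^{\otimes N}$. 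Worse, the group you would need to be non-killable, $H^1(X, L\otimes A^{\otimes N})$, vanishes for $N\gg 0$ by Fujita's vanishing theorem (nef twisted by a sufficiently ample bundle), and for small $N$ nothing in the proof of Theorem \ref{Totaro} survives the twist: that proof hinges on the exact numerics $h^0(X,L^m)=1$, the $p$-torsion of $\O_D(D)$, and the two explicit extensions built from powers of $L$ alone; your remark that these are ``unaffected by tensoring the ambient sequence by $p^*A$'' conflates tensoring a fixed extension (which does not change the relevant $\Ext$-group you need, namely $H^1(X,L\otimes A^{\otimes N})$ itself) with actually proving non-vanishing and non-killability for the twisted bundle, for which no argument is given. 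So the reduction lands on a statement that is false for large $N$ and unproven for any $N$.

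The missing idea is that the ample contribution should be placed \emph{inside} the projectivized bundle rather than as a tensor twist of the tautological class, and your claim that ``some twist is genuinely unavoidable'' is exactly what this refutes. The paper takes $W=\PP_X(E)$ with $E=L^{p-1}\oplus A$ and $L_W=\O_{\PP_X(E)}(1)$: then $L_W$ is nef because $E$ is nef, big because $c_1(L_W)^3=A^2+(p-1)LA>0$ by Leray--Hirsch, and the section $s\colon X\to W$ given by the quotient $E\to L^{p-1}$ satisfies $s^*L_W\simeq L^{p-1}$ \emph{with no ample twist}, so Theorem \ref{Totaro} applies verbatim. Moreover, instead of your restriction sequence (whose surjectivity onto $H^1$ of the section needs extra vanishing), the Leray spectral sequence gives $H^1(W,L_W)\simeq H^1(X,E)=H^1(X,L^{p-1})\oplus H^1(X,A)$, whence $s^*$ is surjective, and the commutative square with $(\pi')^*\colon H^1(X,L^{p-1})\to H^1(Y,(\pi')^*L^{p-1})$ non-zero forces $\pi^*$ to be non-zero. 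As written, your proposal does not prove the proposition; replacing your $(W,L_W)$ by this choice repairs it.
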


\begin{proof}
Let $X$ and $L$ be as in Section \ref{Tot}. Let $A$ be any ample line bundle on $X$.
Let $p:W=\PP _X(E)\to X$ be the projectivization of $E=L^{p-1}\oplus A$ over $X$ (this $W$ is similar
to that in the proof of \cite[Theorem 7.1]{To}) and set $L_W=\O_{\PP _X(E)}(1)$. Then $L_W$ is a nef and big line bundle
(nefness is clear as $E$ is nef; bigness follows from $c_1(L_W)^3= A^2+(p-1)LA>0$ which can be checked using
the Leray--Hirsch theorem $c_1(L_W)^2-c_1(L_W)p^*c_1(E)+p^*(c_2(E))=0$).

Note that by construction there exists a section $s: X\to W$ such that $s^*L_W\simeq L ^{p-1}$.
Let $\pi: Z\to W$ be a finite surjective morphism and let $Y$ be an irreducible component
of $Z\times_W X$ dominating $X$. Let $s':Y\to W$ and $\pi':Y\to X$ denote the corresponding maps.
We have the following commutative diagram
$$
\xymatrix{H^1(W, L_W)\ar[r]^{\pi^*}\ar[d]^{s^*}& H^1(Z, \pi^*L_W)\ar[d]^{(s')^*} \\
H^1(X, L^{p-1})\ar[r]^{(\pi')^*}& H^1(X', (\pi')^*L^{p-1})\\
}
$$
in which the map $(\pi')^*$ is non-zero by Theorem \ref{Totaro}.
On the other hand, the Leray spectral sequence shows that
the natural injection
$$H^1(X, E)=H^1(X, p_*L_W)\to H^1(W, L_W)$$
is an isomorphism and hence $s^*$ is surjective. Therefore the map $\pi^*$ is non-zero.
\end{proof}

\begin{Remark}
Note that in the above example we have
$$\begin{array}{rcl}
H^2(W, L_W^{-m-2})&=&H^1(X, R^1p_*\O_{\PP (E)}(-m-2))\simeq H^1(X, (S^{m}E)^*\otimes (\det E)^{-1})\\
&=& \bigoplus _{i=0}^mH^1(X, L^{-(p-1)(m-i+1)}\otimes A^{-(i+1)} ).
\end{array}$$
for $m\ge 0$.
So we can always kill it by a large power of the Frobenius morphism.
\end{Remark}

\section{Vanishing theorems in positive characteristic}

Let us recall the following theorem (see \cite[Theorem 10]{Fu};
see also \cite[Theorem 2.22 and Corollary 2.27]{La} for effective
versions of this theorem):

\begin{Theorem}
Let $L$ be a nef and big line bundle on a normal projective
$k$-variety $X$ of dimension $\ge 2$. Then $H^1(X, L^{-m})=0$ for $m\gg 0$.
\end{Theorem}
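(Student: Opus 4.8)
The plan is to reduce the vanishing $H^1(X,L^{-m})=0$ (for $m\gg 0$) to the normal surface case by cutting down with hyperplane sections, and then to handle the surface case directly. This is the standard strategy for such ``first cohomology'' vanishing results, and the assumption $\dim X\ge 2$ together with nef-and-bigness is exactly what makes it work.

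First I would treat the surface case $\dim X=2$. Here $L$ is nef and big on a normal projective surface $X$. One knows by general theory (e.g. via the Fujita-type results already cited, or directly) that a nef and big line bundle on a normal surface has finitely generated section ring after replacing $X$ by a resolution, but more simply: take a resolution $f\colon \tilde X\to X$, pull $L$ back to a nef and big line bundle $f^*L$, write $f^*L = P + N$ in its Zariski decomposition, and use that for $m\gg 0$ we have $H^1(\tilde X, f^*L^{-m})$ controlled by the intersection-theoretic positivity of $P$. Alternatively, and more in the spirit of this paper, one uses Serre duality on $\tilde X$: $H^1(\tilde X, f^*L^{-m})^\vee \cong H^1(\tilde X, \omega_{\tilde X}\otimes f^*L^{m})$, and for $m$ large the latter vanishes because $\omega_{\tilde X}\otimes f^*L^m$ is, say, globally generated away from the exceptional locus and one controls the exceptional contribution; then one transfers back to $X$ using that $R^1f_*\O_{\tilde X}$ is supported on points and $f_*f^*L^{-m}=L^{-m}$ by normality. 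The upshot is a clean statement: $H^1(X,L^{-m})=0$ for $m\gg 0$ when $\dim X=2$.

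Next, for $\dim X=n\ge 3$, I would induct on $n$. Choose a very ample line bundle $H$ on $X$ and a general member $Y\in |H|$; by Bertini (valid in all characteristics for the general member of a very ample linear system on a normal variety) $Y$ is a normal projective variety of dimension $n-1$, and $L|_Y$ is again nef and big. From the short exact sequence
$$0\to L^{-m}\otimes\O_X(-H)\to L^{-m}\to L^{-m}|_Y\to 0$$
one gets the exact piece
$$H^1(X,L^{-m})\to H^1(Y,L^{-m}|_Y)$$
with kernel a quotient of $H^1(X, L^{-m}\otimes\O_X(-H))$. By the inductive hypothesis applied to $L|_Y$ on $Y$, the target $H^1(Y,L^{-m}|_Y)$ vanishes for $m\gg 0$, so it suffices to show $H^1(X,L^{-m}(-H))=0$ for $m\gg 0$; but $L^m(H)$ is nef and big (even ample for $m\gg 0$ in the relevant direction, or one applies the base case/induction again), so this is of the same shape and is handled by the same argument — or, more efficiently, one observes that $H^2$ of the twist stabilizes and runs a dévissage in $m$. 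Either way the induction closes.

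The main obstacle I expect is the surface base case: over a field of positive characteristic one cannot invoke Kodaira-type vanishing, so one genuinely needs the Fujita/Langer-style asymptotic vanishing $H^1(X,L^{-m})=0$ for $m\gg 0$ on normal surfaces — this is exactly the input recalled just above in the cited Theorem 10 of \cite{Fu} and the effective versions in \cite{La}, so in practice I would simply quote it for surfaces and spend the real work on the Bertini-induction step. The secondary subtlety is ensuring normality is preserved by the general hyperplane section and that $L|_Y$ stays big (not just nef); bigness of the restriction follows because $(L|_Y)^{n-1}\cdot\text{(stuff)}$ computes to $L^{n-1}\cdot H>0$ by nef-and-bigness of $L$, so this is routine intersection theory rather than a genuine difficulty.
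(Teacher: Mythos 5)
The paper does not actually prove this statement: it is recalled verbatim from Fujita \cite[Theorem 10]{Fu}, with effective versions cited from \cite{La}, so there is no internal proof to compare against and the only question is whether your sketch stands on its own. It does not, and the essential gap is the surface base case, which you never prove: the Serre-duality argument (``$\omega_{\tilde X}\otimes f^*L^m$ is globally generated away from the exceptional locus, hence its $H^1$ vanishes'') is not valid in characteristic $p$, where neither global generation nor even ampleness of a single twist yields $H^1$-vanishing --- Kodaira-type vanishing fails (Raynaud), and this failure is precisely why Fujita's theorem and the Bogomolov-type effective bounds of \cite{La} are nontrivial. The Zariski-decomposition remark only reduces to the same unproved vanishing for the nef and big positive part. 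You then concede you would ``simply quote it for surfaces,'' i.e.\ quote the theorem being proved in the only case that carries the real content; at that point your argument is a reduction of the statement to its surface case plus the same citation the paper makes, not an independent proof.

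The induction step also does not close as written. You need $H^1(X, L^{-m}(-H))=0$ for $m\gg 0$, and the justification ``$L^m(H)$ is ample, so this is of the same shape'' is wrong in kind: the theorem concerns large powers of a \emph{fixed} nef and big bundle, whereas you would need $H^1(X, A_m^{-1})=0$ for the single ample bundles $A_m=L^m(H)$, and exactly this can be nonzero in characteristic $p$. The standard repair is the one this paper uses in the induction step of its Theorem \ref{surface-vanishing}: choose $H$ \emph{sufficiently} ample so that $H^1(X, L^{-m}(-H))=0$ for all $m\ge 1$, which follows from Fujita's vanishing theorem \cite[Theorem 1]{Fu} via Serre duality, since $H^1(X,L^{-m}(-H))^\vee\simeq H^{d-1}(X,\omega_X\otimes L^m(H))$ and $d-1\ge 1$; note, however, that this duality step needs $X$ smooth (or at least Cohen--Macaulay), so for merely normal $X$ you must first pass to a resolution or argue separately, which your sketch does not address --- and ``one observes that $H^2$ of the twist stabilizes'' is an assertion, not an argument. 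The Bertini ingredient (normality of a general very ample hyperplane section, and bigness of $L|_Y$) is fine, but it is the least of the difficulties here.
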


Taking composition of normalization and suitable Frobenius morphisms, the above theorem
can be used to deduce the following corollary:

\begin{Corollary}
Let $L$ be a nef and big line bundle on a normal projective
$k$-variety $X$. Then there exists a finite surjective morphism $\pi: Y\to X$
such that $H^1(Y, \pi^*L^{-1})=0$. In particular, the map  $\pi^*: H^1(X, L^{-1})\to H^1(Y, \pi^*L^{-1})$
induced by $\pi$ is zero.
\end{Corollary}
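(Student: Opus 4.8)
The plan is to reduce the statement over an arbitrary normal projective $k$-variety to the case covered by the preceding theorem, namely a normal projective variety of dimension $\ge 2$, at the cost of first applying a Frobenius morphism. First I would dispose of the case $\dim X \le 1$: if $\dim X = 0$ the assertion is vacuous, and if $\dim X = 1$ then $X$ is a smooth projective curve, $L^{-1}$ has negative degree, so $H^1(X, \pi^*L^{-1})$ is controlled by Riemann--Roch on a covering curve $Y \to X$ of large degree --- choosing $\deg \pi$ large makes $\deg(\pi^*L^{-1}) = -(\deg\pi)(\deg L)$ so negative that $h^1(Y,\pi^*L^{-1}) = h^0(Y, K_Y \otimes \pi^*L) $ forces the dual cohomology to vanish once $\deg(\pi^*L) > 2g(Y) - 2$; but $g(Y)$ also grows with $\deg\pi$, so instead one passes to a Frobenius power: on the curve $Y = X$ the Frobenius pullback $F^e$ sends $L^{-1}$ to $L^{-p^e}$, whose degree tends to $-\infty$ while the genus stays fixed, giving $H^1(X, L^{-p^e}) = 0$ for $e \gg 0$, and $F^e$ is finite surjective. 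So the curve case is already handled by a Frobenius morphism, and more is true.

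For $\dim X \ge 2$ the strategy is: apply the cited Theorem to get an integer $m_0$ with $H^1(X, L^{-m_0}) = 0$, then realize $L^{-m_0}$ as the pullback of $L^{-1}$ under a finite surjective morphism $Y \to X$. The natural candidate is to take $Y = X$ with the $e$-th absolute Frobenius $F_X^e : X \to X$ when $m_0$ is a power $p^e$ of the characteristic. Since $H^1(X, L^{-m})$ vanishes for \emph{all} $m \gg 0$ (the cited theorem gives this, not just for a single $m$), one may choose $e$ large enough that $p^e \ge m_0$, whence $H^1(X, L^{-p^e}) = 0$; and $(F_X^e)^* L^{-1} \simeq L^{-p^e}$ because Frobenius acts on Picard groups by multiplication by $p^e$. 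The absolute Frobenius is finite and surjective (it is a homeomorphism), so $\pi = F_X^e$ does the job, and the final sentence --- that $\pi^* : H^1(X, L^{-1}) \to H^1(Y, \pi^*L^{-1}) = 0$ is the zero map --- is then immediate.

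There is one subtlety the proof must address: the cited Theorem is stated for $X$ \emph{normal} of dimension $\ge 2$, and $F_X^e$ preserves normality (it is finite over a normal base, and normality of $X$ gives normality of its Frobenius twist; in fact over a perfect field the absolute Frobenius is an isomorphism of abstract schemes), so applying the theorem to $X$ itself is legitimate. If one prefers to stay closer to the corollary's phrasing, one can take the composition of the normalization $\nu : X' \to X$ (trivial here since $X$ is already normal) with a Frobenius power of $X'$, which is why the hint in the excerpt mentions "composition of normalization and suitable Frobenius morphisms" --- this phrasing covers the version of the corollary where $X$ is only assumed normal projective without the dimension restriction, the dimension-$\le 1$ cases being subsumed.

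The main obstacle, such as it is, is purely bookkeeping: one must make sure that "$m \gg 0$" in the quoted theorem can be met by a value of the form $p^e$ and that the identification $(F_X^e)^*L \simeq L^{p^e}$ is invoked correctly, together with the observation that $F_X^e$ is finite surjective so that it is an admissible choice of $\pi$. No genuine geometric difficulty arises; the content is entirely in the already-quoted vanishing theorem, and the corollary is a formal consequence once one notices that Frobenius pullback multiplies line bundles.
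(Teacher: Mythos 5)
Your main case ($\dim X\ge 2$) is exactly the paper's argument: the paper derives the corollary in one line by composing the (here trivial) normalization with a high Frobenius power, using $(F_X^e)^*L^{-1}\simeq L^{-p^e}$ and the theorem's vanishing $H^1(X,L^{-m})=0$ for all $m\gg 0$, so in particular for $m=p^e$. Your bookkeeping about $F_X^e$ being finite surjective, preserving normality, and multiplying line bundles by $p^e$ is correct, and the final sentence of the corollary is indeed immediate.

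However, your treatment of the curve case is wrong, and the error is worth flagging. On a curve $Y$, Serre duality gives $h^1(Y,\pi^*L^{-1})=h^0(Y,K_Y\otimes\pi^*L)$, and making $\deg(\pi^*L^{-1})$ \emph{more} negative makes this $h^0$ \emph{larger}, not zero: by Riemann--Roch, $h^1(X,L^{-p^e})=g-1+p^e\deg L\to\infty$ as $e\to\infty$ (the vanishing criterion ``degree $>2g-2$'' applies to $h^1$ of the bundle itself, i.e.\ one would need $\deg(\pi^*L^{-1})>2g-2$, which fails). In fact the statement ``$H^1(Y,\pi^*L^{-1})=0$ for some finite cover'' is simply false for curves whenever $g(X)\ge 1$, or already for $X=\PP^1$ with $\deg L\ge 2$, since $K_Y\otimes\pi^*L$ then always has sections. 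So the corollary must be read with the dimension hypothesis $\dim X\ge 2$ inherited from the theorem it follows (for curves the Kawamata--Viehweg-type vanishing concerns only $H^i$ with $i<\dim X=1$, i.e.\ $H^0$, which is trivial); your attempt to ``subsume'' dimension $\le 1$ proves something false and should be deleted rather than repaired.
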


This corollary answers positively \cite[Question 6.13]{Bh} in the
surface case. In \cite[pp. 526--527]{Fu} Fujita uses Raynaud's
counterexample to Kodaira's vanishing theorem in positive
characteristic to construct a nef and big line bundle $L$ on a
smooth projective threefold $X$ (in positive characteristic) such
that $H^2(X, L^{-m})\ne 0$ for all $m\gg 0$. However, one can
easily see that in this example the map induced by the $m$-th
Frobenius pull back on $H^2(X, L^{-1})$ vanishes for all $m\gg 0$.
So if $L$ is a nef and big line bundle on a projective variety
then killing $H^i(X, L^{-1})$ for $i<\dim X$ by Frobenius
morphisms is the best kind of vanishing theorem one can possibly
expect.

\begin{Question}
Let $L$ be a nef and big line bundle on a smooth projective variety defined over an algebraically
closed field of positive characteristic. Fix an integer $0\le i< \dim X$.
Is the map $H^i(X, L^{-1})\to H^i(X, L^{-p^m})$ induced by the $m$-th Frobenius pull back zero
for $m\gg 0$?
\end{Question}

By the above we know that an answer is positive if $i\le 1$. By Serre's vanishing theorem
we also know that $ H^i(X, L^{-p^m})=0$ for ample $L$ and $m\gg 0$.
But in general an answer to the above question
is not known even if $L$ is semiample and big (note that \cite[Proposition 6.3]{Bh}
uses finite surjective morphisms to kill $H^i(X, L^{-1})$).

\section{Vanishing theorem on varieties over finite fields}\label{van-s}

Let us recall that a line bundle $L$ on a variety $X$ is called \emph{strictly nef}
if it has positive degree on every curve in $X$.

\begin{Theorem}\label{surface-vanishing}
Let $X$ be a smooth projective variety of dimension $d\ge 2$
defined over $\bar \FF_q$ where $q=p^e$ for some $e\ge 1$.
Let $L$ be a strictly nef line bundle on $X$. Then for large $n$ the map
$H^1(X, L^{-1})\to H^1(X,(F_X^n)^*L^{-1})$ induced by the $n$-th Frobenius morphism
$F_X^n$ is zero.
\end{Theorem}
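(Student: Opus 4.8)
The plan is to translate the non-vanishing problem for $H^1(X,L^{-1})$ into a statement about extensions of line bundles and then use the special structure of varieties over $\bar\FF_q$ to kill these extensions. Concretely, suppose the map $H^1(X,L^{-1})\to H^1(X,(F_X^n)^*L^{-1})$ is non-zero for all $n$; I want to derive a contradiction with strict nefness of $L$. A non-zero class $\xi\in H^1(X,L^{-1})=\Ext^1(L,\O_X)$ that survives all Frobenius pullbacks gives a rank-two bundle $E$ sitting in a short exact sequence
\begin{equation*}
0\to\O_X\to E\to L\to 0
\end{equation*}
whose pullback under every Frobenius power $F_X^n$ is again non-split. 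So the first step is to record that this hypothesis produces a rank-two bundle $E$ with $\det E=L$ such that no Frobenius twist $(F_X^n)^*E$ splits, and more: for every positive integer $n$ the bundle $(F_X^n)^*E$ has $h^0=1$ (the only section being the one coming from $\O_X$), since an extra section would force a splitting or an effective divisor contradicting strict nefness (a sub-line-bundle mapping non-trivially to $L$ would have to be a proper effective divisor, impossible on a variety where $L$ is strictly nef and the only candidate is $\O_X$ itself).

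The heart of the argument is to analyze $E$ through Harder--Narasimhan/semistability on a general curve. Restrict $E$ to a general complete-intersection curve $C\subset X$ of high degree. If $E|_C$ were unstable for every large Frobenius power, its maximal destabilizing sub-line-bundle would have degree tending to infinity relative to $\deg L|_C$ in a controlled way; one shows this is incompatible with the global extension structure (the destabilizing subbundle would globalize, by a boundedness/restriction argument, to a sub-line-bundle of some $(F_X^n)^*E$ of positive degree, hence an effective divisor, contradicting strict nefness as above together with $h^0((F_X^n)^*E)=1$). So for large $n$, $(F_X^m)^*E$ is strongly semistable (semistable after all further Frobenius pullbacks) on general curves, i.e. $E$ becomes a strongly semistable bundle of the relevant slope after one Frobenius twist; replace $E$ by this twist. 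Now $E$ is strongly semistable with $\det E=L^{p^{n_0}}$, which is still strictly nef.

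Next comes the key input the introduction advertises: over $\bar\FF_q$, a strongly semistable bundle whose Chern classes are "numerically trivial after twisting" is torsion-like, by boundedness of the family of semistable sheaves with fixed Chern classes together with a Frobenius-orbit pigeonhole. The plan is to apply this not to $E$ itself but to the bundle $\cHom(E,\O_X)$ or to $E\otimes\det E^{-1}$ twisted appropriately, so as to land in a bounded family. Here is the mechanism: the Frobenius pullbacks $(F_X^{n})^*E$, suitably normalized (say $(F_X^n)^*E\otimes L^{-?}$ — one has to fix numerical normalization so the discriminant/slope is right), all have the same Chern classes and are all semistable, so they lie in a bounded family; since there are infinitely many $n$ but only finitely many isomorphism classes up to a fixed twist, two of them coincide, giving a Frobenius-periodicity $(F_X^{n+k})^*E\cong (F_X^n)^*E\otimes(\text{line bundle})$. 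By the Lange--Stuhler theorem such a periodic bundle is étale trivializable, i.e. becomes trivial on a finite étale cover $h:X'\to X$; but then the extension class $\xi$ pulls back to zero on $X'$ (the extension $0\to\O\to h^*E\to h^*L\to 0$ with $h^*E$ trivial forces $h^*L\cong\O_{X'}$, making $h^*L$ — hence $L$ — numerically trivial, contradicting strict nefness since $L$ would then have degree $0$ on curves). That is the contradiction.

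The main obstacle I expect is the bookkeeping in the bounded-family step: making sure the normalized Frobenius pullbacks genuinely fall into a single bounded family requires controlling the second Chern class, and $c_2((F_X^n)^*E)=p^{2n}\,\text{(something)}$ grows unless the relevant numerical invariant vanishes — so one has to first establish, probably from strict nefness plus semistability (a Bogomolov-type inequality after Frobenius, in the spirit of \cite{La}), that the discriminant is zero and $E$ is in fact numerically flat after a twist. Getting from "strongly semistable on general curves" to "numerically flat on $X$" and controlling how $c_1$ interacts with $L$ is where the technical weight lies; the strict nefness is used precisely to rule out the escape route where a sub-line-bundle of positive $L$-degree appears. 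Once $E$ is numerically flat, everything above goes through: boundedness gives Frobenius-periodicity, Lange--Stuhler gives étale trivialization, and the surviving non-zero $\xi$ is killed, contradicting the standing hypothesis.
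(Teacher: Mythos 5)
Your proposal has the right skeleton --- turn a class that survives Frobenius into a rank-two extension $E$, argue that non-splitness of all Frobenius pullbacks forces strong semistability, then use boundedness of semistable bundles over $\bar\FF_q$ plus Lange--Stuhler to reach a contradiction (this last step is essentially the paper's Lemma \ref{semiample}) --- but two essential pieces are missing. First, you never reduce to the numerically degenerate case. The paper disposes of $L^2>0$ in dimension $2$ by Nakai--Moischezon (a strictly nef $L$ with $L^2>0$ is ample, and then the statement is immediate), and treats $d\ge 3$ by induction, using Fujita's vanishing theorem to pick a smooth very ample $H$ with $H^1(X,L^{-m}(-H))=0$ so that $H^1(X,L^{-m})$ injects into $H^1(H,L^{-m}|_H)$; only after this is one in the situation $L^2=0$ on a surface. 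Without $L^2=0$ your argument fails in two distinct places: (i) the repeated assertion that an effective divisor, or a positive-degree sub-line-bundle, ``contradicts strict nefness'' is simply false --- strictly nef bundles can be effective (ample ones are); the actual contradiction in the paper uses $L^2=0$, which makes any member of $|L^m|$ produce a curve $C$ with $LC=0$, and it is only then that semiampleness or effectivity is absurd; and (ii) the normalization you need for boundedness does not exist: twisting $\pi^*E$ down by a square root of $\pi^*L$ yields a bundle with $c_2=-\tfrac{1}{4}\deg\pi\cdot L^2$, and the discriminant scales by $p^{2n}$ under Frobenius, so the normalized pullbacks form a bounded family only when $L^2=0$. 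Note also that in dimension $\ge 3$ you cannot split into ``ample'' versus ``$L^2H^{d-2}=0$'' via Nakai--Moischezon (strict nefness controls only curves), which is exactly why the paper inducts on hyperplane sections rather than arguing directly on $X$.

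Second, your key semistability step is not a proof. You pass from ``all pullbacks non-split'' to ``strongly semistable after a twist'' by claiming that a destabilizing sub-line-bundle of $(F_X^n)^*E$ restricted to a general complete-intersection curve would ``globalize'' to a positive-degree sub-line-bundle of $(F_X^n)^*E$ and hence to an effective divisor contradicting strict nefness: destabilizing subsheaves of restrictions need not extend to $X$, a positive-degree sub-line-bundle is not an effective divisor, and, as above, effectivity contradicts nothing. The paper's substitute is a wall-crossing argument: $E$ is $L$-semistable because both graded pieces have $L$-degree zero; for $H_t=(1-t)L+tH$, a destabilizing $M\hookrightarrow E$ induces a non-zero map $M\to L$, hence an effective $D\in|L\otimes M^{-1}|$ with $1\le DL<\tfrac12\,t\,LH$ (this inequality is where strict nefness actually enters), so $E$ is $H_t$-semistable for small $t>0$; a strictly semistable wall would make $E$ strongly $H_t$-semistable and hence, by Lemma \ref{semiample}, $L$ semiample, impossible when $L^2=0$; therefore every non-split Frobenius pullback is $H$-stable, strong $H$-semistability is again excluded by Lemma \ref{semiample}, and so some pullback must split. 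Two further repairs: Lange--Stuhler needs genuine periodicity $(F^{n})^*E'\simeq(F^{m})^*E'$, not periodicity up to a line-bundle twist (the paper arranges this by making the determinant trivial and using that everything is defined over a fixed finite field), and étale trivializability of the normalized bundle only gives that $(\pi')^*L$ is globally generated --- leading, after the Kodaira-dimension analysis, to ``semiample or torsion'' --- not your much stronger claim that $h^*L\simeq\O_{X'}$.
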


The following lemma is well-known, but we include a proof for completeness:

\begin{Lemma} \label{semiampleness}
Let $X$ be a normal projective surface defined over an algebraically closed field.
Let $L$ be a nef line bundle on $X$ such that $L^2=0$ and $\kappa(X, L)=1$. Then $L$
is semiample.
\end{Lemma}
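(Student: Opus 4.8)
The plan is to use the Zariski decomposition (or rather the fact that $\kappa(X,L)=1$ forces $L$ to be numerically equivalent to a multiple of a fixed curve) together with the structure of the Iitaka fibration. Since $\kappa(X,L)=1$, some positive multiple $L^{\otimes m}$ has a section, and the associated Iitaka map, after taking a resolution, gives a fibration $f:X'\to B$ onto a smooth curve $B$; I would first replace $X$ by a resolution to assume $X$ is smooth (semiampleness of a nef bundle can be checked after pulling back by a birational morphism, since a nef bundle on $X$ is semiample iff its pullback is). On the smooth model, $\kappa=1$ means the moving part of $|L^{\otimes m}|$ is pulled back from $B$.

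Concretely, I would argue as follows. Write $L^{\otimes m}\simeq \O_X(M+N)$ where $M$ is the moving part (base-point free in codimension one after blowing up) and $N$ is the fixed part; after a resolution we may assume $|M|$ is base-point free and defines $f:X\to B$ with $M = f^*(\text{ample on }B)$ up to numerical equivalence, so $M^2=0$ and $M$ is semiample. The key point is then to show $N$ contributes nothing numerically: from $L^2=0$, $L\cdot M\ge 0$, and $L\cdot N\ge 0$ (both by nefness of $L$), together with $mL\cdot(M+N)=mL^2=0$, we get $L\cdot M=L\cdot N=0$; and $M\cdot N\ge 0$, $M^2=0$, $N^2 = (mL-M)\cdot N - 0 = mL\cdot N - M\cdot N = -M\cdot N\le 0$, while the Hodge index theorem applied to the nef class $M$ with $M^2=0$ forces any class orthogonal to $M$ to have nonpositive self-intersection with equality controlling proportionality. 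I would use Hodge index to conclude that $N$ is numerically a (nonnegative rational) multiple of $M$ on the part of $N$ meeting fibres nontrivially, and that the components of $N$ contained in fibres of $f$ form negative-definite configurations.

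From here the endgame: $L\equiv \frac{1}{m}(M+N)$ and $L\cdot F = \frac1m M\cdot F = 0$ for a general fibre $F$ (since $M\cdot F=0$ by $M=f^*(\cdot)$), so $L$ restricted to every fibre of $f$ is numerically trivial. A nef line bundle on a projective curve (the generic/closed fibres, which are connected by Stein factorization — I would arrange $f$ to have connected fibres) that has degree $0$ is a degree-$0$ line bundle; over the algebraically closed ground field, combined with the fact that $L$ is nef and $L^2=0$ with $\kappa=1$, one shows $L^{\otimes m'}$ for suitable $m'$ is actually pulled back from the base $B$ up to a divisor supported on fibres. Then semiampleness reduces to semiampleness of a nef line bundle on the curve $B$ (automatic, as every nef line bundle on a smooth projective curve is semiample) plus handling the vertical correction, which is effective and contracted by $f$; a standard argument (e.g.\ using that $f$-trivial nef divisors are pulled back from $B$ after a twist, or Keel–Mori / Artin-style contraction on surfaces) finishes it.

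The main obstacle I expect is the passage from "$L$ is numerically trivial on every fibre and $L^2=0$" to "$L^{\otimes m'}$ is an honest pullback $f^*(\text{line bundle on }B)$ tensored with a fibral divisor that is itself semiample." Numerical triviality on fibres does not immediately give an isomorphism of line bundles; one needs that the relative Picard scheme behaves well, or one argues via the Leray spectral sequence and $R^1f_*$, or one invokes the base field being nice (though the lemma is stated over an arbitrary algebraically closed field, so I cannot use the $\bar\FF_p$ torsion trick here). I think the cleanest route is: reduce to $L$ $f$-numerically trivial, deduce $L\equiv f^*D$ for some $\QQ$-divisor $D$ on $B$ (using that numerical equivalence is detected fibrewise plus on one multisection — Hodge index again), conclude $\deg D>0$ is impossible only if... actually $D$ nef of positive degree would make $L$ big, contradicting $L^2=0$, so $\deg D=0$, hence $D\equiv 0$ on the curve $B$, hence $L$ is numerically trivial on $X$ — but that contradicts $\kappa(X,L)=1$ unless $L$ is torsion, in which case it is trivially semiample. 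Wait: so the correct conclusion is that the fibral part cannot be ignored; $L\equiv f^*(\text{pt})\otimes(\text{effective fibral})$, and one shows the fibral part is numerically trivial (negative-definite and effective and $f$-trivial forces it to be $0$ as a numerical class but it can be a genuine nonzero torsion-in-fibres divisor). So in the end $L^{\otimes m'}\simeq f^*\O_B(\text{effective}) $ plus a vertical divisor $V$ with $V$ effective, $f(V)\ne B$, $V$ not moving; semiampleness of $\O_X(M)=f^*(\text{ample})$ handles the horizontal direction and the vertical divisor, being $f$-exceptional in the Zariski-decomposition sense, can be shown to lie in the stable base locus harmlessly — this is where I would be most careful, likely citing a standard semiampleness criterion for nef divisors on surfaces with numerical dimension one.
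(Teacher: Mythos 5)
Your first half follows essentially the paper's own route: reduce to $X$ smooth (semiampleness descends through a resolution because $X$ is normal), split $|L^{m}|$ into moving part $M$ and fixed part $N$, and use nefness of $L$ together with $L^2=0$ to get $L\cdot M=L\cdot N=0$, whence $M^2=M\cdot N=N^2=0$ and $|M|$ gives a base point free pencil onto a curve $B$ (the paper gets freeness of $|M|$ on $X$ itself directly from $M^2=0$ after resolving the map, so no second blow-up is needed). Up to this point your argument is sound.

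The endgame, however, contains genuine errors and never closes. First, the claim that ``$D$ nef of positive degree would make $L$ big, contradicting $L^2=0$'' is false: the pullback of a positive-degree divisor from a curve has self-intersection $0$, so there is no contradiction, and the ensuing chain ($\deg D=0$, hence $L\equiv 0$, hence conflict with $\kappa=1$) is spurious; this is what forces your backtracking. Second, the assertion that the fibral part is ``$0$ as a numerical class but a genuine nonzero torsion-in-fibres divisor'' cannot be right: a nonzero effective divisor on a projective surface is never numerically trivial (intersect with an ample class). The fact you actually need, and which the paper uses, is Zariski's lemma on fibre components: since $M\cdot N=0$ every component of $N$ is vertical for $f$, and since $N^2=0$ the negative semi-definite intersection form on vertical divisors forces $N$ to be a rational multiple of the full fibres containing it; hence some multiple of $N$ is $f^*$ of an effective divisor on $B$, so it is base point free, and then $mL=M+N$ is a sum of semiample divisors, giving the lemma. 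Your proposal instead ends by suggesting the vertical part can ``lie in the stable base locus harmlessly'' and defers to an unnamed semiampleness criterion --- but removing the fixed part from the base locus of high multiples is precisely the content of the statement, so as written the proof is incomplete; the detour through ``$L$ numerically trivial on the fibres,'' relative Picard schemes and $R^1f_*$ is unnecessary, since only the numerical structure of the effective divisor $N$ matters, not $L$ restricted to fibres.
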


\begin{proof}
Clearly, we can assume that $X$ is smooth. By assumption there
exists $m>0$ such that $h^0(X, L^{ m})\ge 2$. Let us write the
linear system $|L^{ m}|$ as a sum of its moving part $M$ and its
fixed part $B$. Since $|M|$ has only a finite number of base
points, we have $MB\ge 0$. Since $0=(M+B)^2=m ML+mBL$, we have
$ML=BL=0$, which implies $M^2=MB=0$. Now the rational map given by
$|M|$ goes into a curve. Resolving this map we see that $M^2=0$
implies that $|M|$ has no base points. Since $MB=0$ this implies
that $B$ is equivalent to the pullback of a divisor from $C$, so
some multiple of $B$ is also base point free.
\end{proof}

Let $X$ be a smooth projective surface defined over $\bar \FF_q$.
Let $L$ be a divisor on $X$ such that $L^2=0$. If $H^1(X,
L^{-1})\ne 0$ then there exists a non-trivial extension
$$0\to \O_X\to E\to L\to 0.$$

\begin{Lemma} \label{semiample}
If $E$ is strongly slope $H$-semistable for some ample divisor $H$
then $L$ is semiample.
\end{Lemma}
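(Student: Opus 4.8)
The plan is to exploit the fact that over $\bar\FF_q$ strong semistability gives us an essentially finite (hence étale-trivializable-after-a-cover) bundle, combined with the numerical constraints coming from $L^2=0$. First I would use that $E$ fits into an extension $0\to \O_X\to E\to L\to 0$ with $L$ nef and $L^2=0$, so $E$ is nef with $c_1(E)=L$, $c_1(E)^2=0$, and $c_2(E)=0$ (the extension is an element of $H^1(X,L^{-1})$, and $c_2$ of such a rank-$2$ bundle built from two line bundles one of which is trivial vanishes). Thus $E$ is a strongly $H$-semistable bundle with vanishing discriminant $\Delta(E)=4c_2(E)-c_1(E)^2=0$. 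The key input from the paper's philosophy (the remark in the Introduction about numerically trivial bundles being torsion and its higher-rank analogue via boundedness of semistable sheaves with fixed Chern classes) is that a strongly semistable bundle with $\Delta=0$ and numerically trivial-ish Chern data on a variety over $\bar\FF_q$ is, after pulling back by a suitable power of Frobenius and then a finite étale cover, trivial — or more precisely it becomes a successive extension of line bundles that are themselves torsion. I would record this as the structural heart of the argument.

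Next I would descend the information back down. Concretely: since $E$ is strongly $H$-semistable and all Frobenius pullbacks $(F_X^n)^*E$ are $H$-semistable with the same (vanishing) discriminant, the family of all such bundles is bounded; over $\bar\FF_q$ a bounded family of bundles is defined over a common finite field and hence is finite, so there exist $n>m\ge 0$ with $(F_X^n)^*E\simeq (F_X^m)^*E$. Twisting appropriately we may assume $(F_X^k)^*E\simeq E$ for some $k\ge 1$ and correspondingly $L^{p^k}\simeq L$ up to the action of Frobenius on $\Pic$; in any case the line bundle $L$ becomes, after pullback by a power of Frobenius composed with a finite étale cover $h\colon X'\to X$, a bundle on which the extension either splits or $h^*L$ is trivial. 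If $h^*L$ is trivial then $L$ is étale-trivializable, hence torsion by the Lange–Stuhler theorem (as in Lemma \ref{torsion}), hence semiample. If instead the pulled-back extension splits, then $h^*E\simeq \O_{X'}\oplus h^*L$ contains $h^*L$ as a direct summand, and comparing with the original extension on $X$ one sees that $H^0(X, L\otimes (h_*\O_{X'}/\O_X))\ne 0$, so $L^{-1}$ is an object of $\langle h_*\O_{X'}\rangle$ and $L$ is torsion by Lemma \ref{torsion} — again semiample.

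The remaining case is when the extension class is nonzero even after every Frobenius pullback and every finite étale cover. Here I would argue that then $E$ is \emph{not} strongly semistable unless $\kappa(X,L)\ge 1$: if $L$ is not torsion and $H^0(X,L^m)=0$ for all $m>0$ then the subsheaf $\O_X\subset E$ cannot be destabilized and $E$ is in fact stable, and a stable bundle with $\Delta(E)=0$ over $\bar\FF_q$ that stays stable under Frobenius would have to be (after an étale cover) a nontrivial iterated extension of a torsion line bundle by itself, forcing $H^1$ of a torsion line bundle to persist — contradicting that on the étale cover that makes $L$ trivial the class dies. In the complementary case $\kappa(X,L)\ge 1$, together with $L^2=0$ we get $\kappa(X,L)=1$ exactly, and then Lemma \ref{semiampleness} applies directly to give that $L$ is semiample. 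The main obstacle I anticipate is making the "strongly semistable with $\Delta=0$ over $\bar\FF_q$ $\Rightarrow$ essentially finite / étale-trivializable after Frobenius" step fully rigorous — one must invoke boundedness of the $S$-equivalence class of $\{(F_X^n)^*E\}$ and the finiteness of bounded families over $\bar\FF_q$, then unwind the Jordan–Hölder filtration into torsion line bundles; everything else is bookkeeping with the exact sequences above and appeals to Lemma \ref{torsion} and Lemma \ref{semiampleness}.
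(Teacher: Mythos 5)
Your overall strategy---boundedness of Frobenius pullbacks over $\bar\FF_q$, the Lange--Stuhler theorem, and finishing with Lemma \ref{semiampleness}---is the same as the paper's, but two steps have genuine gaps. First, you apply boundedness directly to the family $\{(F_X^n)^*E\}$, claiming these are semistable bundles ``with the same (vanishing) discriminant''. Boundedness (and the finiteness argument over a finite field) needs fixed Chern classes, not just fixed $\Delta$: here $c_1((F_X^n)^*E)=p^nL$, so the $H$-slopes grow without bound whenever $LH>0$ and the family cannot be bounded; moreover an isomorphism $(F_X^n)^*E\simeq(F_X^m)^*E$ would, by comparing determinants, already force $L$ to be torsion, which is essentially what you are trying to prove. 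The paper avoids this by first passing to a generically finite cover $\pi\colon Y\to X$ on which $\pi^*L\simeq\O_Y(2N)$ and twisting, $E'=\pi^*E(-N)$, so that $\det E'$ is trivial and $c_2(E')=0$; only then do boundedness and Lange--Stuhler give an \'etale trivialization. Your phrase ``twisting appropriately we may assume $(F_X^k)^*E\simeq E$'' does not repair this, since no twist by a line bundle on $X$ equalizes the determinants of all Frobenius pullbacks unless $L$ is already torsion.

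Second, the endgame is not a proof. The appeal to Lemma \ref{torsion} is illegitimate: that lemma concerns numerically trivial line bundles, whereas here $L$ only satisfies $L^2=0$ and may have $LH>0$ (exactly the situation in the intended application, where $L$ is strictly nef); asserting that ``$L^{-1}$ is an object of $\langle h_*\O_{X'}\rangle$'' already presupposes numerical triviality. The final case, where the class survives all covers, is handled only by a heuristic about bundles that ``stay stable under Frobenius''. The paper's conclusion is different and concrete: once $E''=(\pi')^*E(-N')$ is trivial, the quotient map $E''\to\O_{Y'}(N')$ shows $\O_{Y'}(N')$ is globally generated, hence $(\pi')^*L$ is globally generated and $L$ is nef (note that nefness of $L$ is an output of this step, not a hypothesis of the lemma, contrary to your first paragraph); then either $\kappa(X,L)=1$ and Lemma \ref{semiampleness} applies, or $\kappa(X,L)\le 0$, in which case Ueno's theorem gives $\kappa(Y',(\pi')^*L)\le 0$, so the globally generated bundle $(\pi')^*L$ is trivial, $L$ is numerically trivial, and over $\bar\FF_p$ it is torsion. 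Your splitting/Lemma \ref{torsion} discussion should be replaced by an argument of this kind.
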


\begin{proof}
Using standard covering tricks we can find a generically finite
morphism $\pi:Y\to X$ from a smooth projective surface $Y$ such
that $\pi^*L\simeq \O _Y(2N)$ for some Cartier divisor $N$. Then
the bundle $E'=\pi^*E(-N)$ has trivial determinant and
$c_2(E')=0$. Hence the family $\{(F_Y^n)^*E'\}_{n\in \NN}$ is bounded (as it is a
family of $H$-semistable bundles with the same Chern classes; see
\cite[Theorem 2.1]{La}). Since we work over $\bar \FF_q$,
$E'$ and $Y$ can be defined over the same finite field. Then the sheaves
$(F_Y^n)^*E'$ are also defined over the same field, so boundedness implies that
there exist some integers $n>m\ge 0$ such that $(F_Y^n)^*E'\simeq (F_Y^m)^*E'$.
Therefore by the Lange--Stuhler theorem \cite{LSt} the bundle $(F_Y^m)^*E'$ is
\'etale trivializable. Summing up, we can find  a generically
finite morphism $\pi':Y'\to X$ from a smooth projective surface
$Y$ such that $(\pi ')^*L\simeq \O _{Y'}(2N')$ for some Cartier
divisor $N'$ and the bundle $E''=(\pi')^*E(-N')$ is trivial. But
$E''$ fits into an exact sequence
$$0\to \O_{Y'}(-N')\to E''\to \O_{Y'}(N')\to 0,$$
so $\O_{Y'}(N')$ is globally generated. In particular, the bundle
$(\pi')^*L$ is nef and hence $L$ is nef. Now if $\kappa (X, L)=1$
then by Lemma \ref{semiampleness} the line bundle $L$ is semiample.
Therefore we can assume that $\kappa (X, L)\le 0$. Then by
\cite[Theorem 5.13]{Ue} we have $\kappa (Y', (\pi')^*L)\le 0$.
Since $(\pi')^*L$ is globally generated, it must therefore be
trivial. This implies that $L$ is numerically trivial and, since
$X$ is defined over $\bar \FF _p$, the line bundle $L$ must be
torsion.
\end{proof}

\medskip

\emph{Proof of Theorem \ref{surface-vanishing}.}

The proof is by induction on the dimension $d$ starting with the most difficult case
$d=2$. In this case we can assume that $L^2=0$ as otherwise $L$ is ample by
the Nakai--Moischezon criterion and the theorem is clear.

If $H^1(X, L^{-1})\ne 0$ then there exists a non-trivial extension
$$0\to \O_X\to E\to L\to 0.$$
Since both $\O_X$ and $L$ have degree zero  with respect to $L$,
the bundle $E$ is slope $L$-semistable. Let us fix an ample
divisor $H$ and consider semistability of $E$ with respect to
$H_t=(1-t)L+tH$ for $0<t\le 1$.

If $E$ is not $H_t$-semistable then there exists a saturated line
bundle $M\hookrightarrow E$ such that $MH_t>\frac{1}{2} LH_t>0$.
Then the induced map $M\to E\to L$ is non-zero giving an effective
divisor $D\in |L\otimes M^{-1}|$. We have
$$1\le DL<DH_t=LH_t-MH_t<\frac{1}{2}LH_t=\frac{1}{2}t LH.$$
This shows that if $0\le t\le \frac{2}{LH}$ then $E$ is
$H_t$-semistable.

Let us assume that $E$ is not slope $H$-stable. Then there exists
some $0<t<1$ such that $E$ is strictly $H_t$-semistable, i.e., it
is an extension of rank $1$ sheaves of equal $H_t$-slope. But such
a bundle is strongly $H_t$-semistable. By Lemma \ref{semiample}
this implies that $L$ is semiample contradicting our assumption
(since $L^2=0$ any section of $L^m$ gives a curve $C$ with
$LC=0$).

Thus we proved that $E$ is slope $H$-stable for any ample
polarization $H$. The $n$-th Frobenius pull back of an extension
of $L^{-1}$ by $\O_X$ is an extension of $L^{-p^n}$ by $\O_X$ and
thus it is either trivial or slope $H$-stable. Since by Lemma
\ref{semiample} $E$ is not strongly $H$-semistable, it follows
that there exists some $n>0$ such that the extension given by $E$
pull backs by $F_X^n$ to the trivial extension. This shows that
for some, possibly larger, $n>0$ the map $H^1(X, L^{-1})\to
H^1(X,L^{-p^n})$ induced by the $n$-th Frobenius pull back is
zero.

Now let us assume that $d\ge 3$ and the assertion holds for varieties of dimension $<d$.
Let us take some very ample divisor $H$ such that $H^1(X, L^{-m}(-H))=0$
for all $m\ge 1$. Such $H$ exists by Fujita's vanishing theorem \cite[Theorem 1]{Fu}.
Using Bertini's theorem we can assume that $H$ is given by a smooth subvariety of $X$.
Then the assertion follows from the induction assumption and the following commutative diagram
$$
\xymatrix{
0\ar[r]&H^1(X, L^{-1})\ar[r]\ar[d]^{(F_X^n)^*}& H^1(H, L^{-1}|_H)\ar[d]^{(F_H^n)^*} \\
0\ar[r]& H^1(X, L^{-p^n})\ar[r] & H^1(H, L^{-p^n}|_H) \\
}
$$

\medskip

\begin{Remarks}
\begin{enumerate}
\item
Note that the proof of Lemma \ref{semiample} works in any
dimension (after some small changes) and it shows the following.
Let $X$ be a smooth projective variety of dimension $d\ge 2$
defined over $\bar \FF_q$. Let $L$ be a line bundle on $X$ such
that $L^2H^{d-2}=0$ for some ample divisor $H$. If $H^1(X,
L^{-1})\ne 0$ then consider a non-trivial extension
$$0\to \O_X\to E\to L\to 0.$$
If $E$ is strongly slope $AH^{d-2}$-semistable for some ample $A$ then either $L$ is torsion or
$\kappa (X, L)\ge 1$.
In both cases we can find a curve $C$ such that $LC=0$ (in the second case we can take
intersection of a divisor given by some section of $L^{m}$ for some $m>0$ with a general complete
intersection of some multiples of $H$). This can be used to omit the use of Fujita's vanishing
theorem in proof of Theorem \ref{surface-vanishing}.

\item
Let us recall Keel's question \cite[Question 0.9]{Ke}. Let $X$
be a smooth projective surface defined over $\bar \FF_p$. Let $L$
be a strictly nef line bundle on $X$. Does it imply that $L$ is ample,
or equivalently that $L^2>0$? Theorem \ref{surface-vanishing} shows that
in this case we get an expected vanishing theorem.

On the other hand, take $r^2$ points ($r>3$) on $\PP^2$ over a
finite field, blow them up and take $L=p^*\O_{\PP^2} (r)\otimes
\O(-E)$, where $p$ is the blow up and $E$ is the exceptional
divisor. Clearly, we have $L^2=0$. If one chooses points on an
irreducible degree $r$ curve then $L$ is nef, but it is not
strictly nef. Is it strictly nef for some choice of points? If so,
it would give a negative answer to Keel's question. One can use
Nagata's results to show that $L$ is strictly nef if we blow up
very general points, but this does not say anything over countable
fields.

\item
It is not known if there exists a strictly nef line bundle on a
smooth projective variety defined over some finite field, which is
not ample. Note that such line bundles cannot be semiample.
However, the only known examples of non-semiample nef line bundles
on normal projective varieties over finite fields (see \cite{To})
are not strictly nef. In \cite[Section 5]{Ke} Keel gives
Koll\'ar's example of a strictly nef, non-semiample line bundle
on a non-normal surface defined over a finite field.

\item
Theorem \ref{surface-vanishing} should be compared to Totaro's
example. There we can find a nef line bundle $M$ with $M^2=0$ on a
smooth projective surface over $\bar \FF_p$ such that $H^1(X,
M^{-1})\ne 0$ and the map $H^1(X, M^{-1})\to H^1(X,M^{-p^n})$
induced by the $n$-th Frobenius pull back is always injective (see
Theorem \ref{Totaro}). By Theorem \ref{surface-vanishing} in any
such example one can find a curve $C$ with $MC=0$.
\end{enumerate}
\end{Remarks}

\bigskip

\emph{\large \bf Acknowledgements.}

The author would like to thank the University of Duisburg-Essen
for its hospitality during writing this paper.

\end{document}